\documentclass[10pt]{article}
\usepackage{amsmath,amsthm,amsfonts,amssymb,amscd, amsxtra}
%%%%%%%%%%%%%%%%%%%%%%%%%%%%%%%%%%%%%%%%%%%%%%%%%%%%%%%%%%%%

\usepackage[latin1]{inputenc}

%\usepackage[notref,notcite]{showkeys}
%%%%%%%%%%%%%%%%%%%%%%%%%%%%%%%%%%%%%%%%%%%%%%%%%%%%%%%%%%%%
\oddsidemargin=0pt
\evensidemargin=0pt
\textwidth=6.5in

\headsep=1cm
%%%%%%%%%%%%%%%%%%%%%%%%%%%%%%%%%%%%%%%%%%%%%%%%%%%%%%%%%%%%
\theoremstyle{plain}
\newtheorem{theorem}{Theorem}[section]
\newtheorem{lemma}{Lemma}[section]
\newtheorem{definition}{Definition}[section]
\newtheorem{corollary}{Corollary}[section]
\newtheorem{proposition}{Proposition}[section]

\newtheorem{remark}{Remark}[section]

\newtheorem{method}{Method}[section]
%%%%%%%%%%%%%%%%%%%%%%%%%%%%%%%%%%%%%%%%%%%%%%%%%%%%%%%%%%%%
\newcommand{\beq}{\begin{equation}}
\newcommand{\eeq}{\end{equation}}
\newcommand{\beqa}{\begin{eqnarray}}
\newcommand{\eeqa}{\end{eqnarray}}
\newcommand{\beqas}{\begin{eqnarray*}}
\newcommand{\eeqas}{\end{eqnarray*}}
%%%%%%%%%%%%%%%%%%%%%%%%%%%%%%%%%%%%%%%%%%%%%%%%%%%%%%%%%%%%

\def\min{\operatorname{min}}
\def\max{\operatorname{max}}

%%%%%%%%%%%%%%%%%%%%%%%%%%%%%%%%%%%%%%%%%%%%
%%%%%%%%%%%%%%%%%%%%%%%%%%%%%%%%%%%%%%%%%%%%

\makeatletter
\renewcommand*{\@biblabel}[1]{\hfill#1.}
\makeatother

\begin{document}

\title{The self regulation problem as an inexact steepest descent method for multicriteria optimization}

\author{
G. C. Bento\thanks{The author was supported in part by CNPq Grant 473756/2009-9 and PROCAD/NF. IME-Universidade Federal de Goi\'as,
Goi\^ania-GO 74001-970, BR (Email: {\tt glaydston@mat.ufg.br})}
\and
Da Cruz Neto, J. X.
\thanks{DM, Universidade Federal do Piau\'{\i},
Teresina, PI 64049-500, BR (Email: {\tt jxavier@ufpi.br}). This author
was partially supported by CNPq GRANT 302011/2011-0 and PRONEX--Optimization(FAPERJ/CNPq)}
\and
P. R. Oliveira \thanks{This author was supported in part by CNPq. COPPE/Sistemas-Universidade Federal do Rio de Janeiro,
Rio de Janeiro, RJ 21945-970, BR (Email: {\tt poliveir@cos.ufrj.br}).}
\and
A. Soubeyran \thanks{GREQAM-AMSE, Aix-Marseille University,
antoine.soubeyran@gmail.com}
}

\date{}
\maketitle
%\centerline{Communicated by Dinh The Luc}
\vspace{.2cm}

\noindent
{\bf Abstract}
In this paper, we study an inexact steepest descent method, with Armijo's rule, for multicriteria optimization. The sequence generated by the method is guaranteed to be well-defined. Assuming quasi-convexity of the multicriteria function we prove full convergence of the sequence to a critical Pareto point.  As an application, this paper offers a model of self regulation in Psychology, using a recent variational rationality approach.

\noindent
{\bf Keywords.} Steepest descent\,$\cdot$\,Pareto optimality\,$\cdot$\,Multicriteria optimization\,$\cdot$\,Quasi-Fej\'er convergence\,$\cdot$\,Quasi-con\-ve\-xi\-ty.

%\noindent{\bf AMS Classification.} ...

%%%%%%%%%%%%%%%%
\section{Introduction}
The   steepest descent method with Armijo's rule for real  continuously differentiable optimization problem (see, for instance, Burachik et al. \cite{Bur1995}),   generates a sequence such that  any accumulation point of it, if any, is critical for the objective function.  This was later generalized for multicriteria optimization by Fliege and Svaiter \cite{Benar2000}, namely, whenever the objective function is a vectorial function.  The full convergence result for real optimization  problem  was assured when the solution set of the problem  be non-empty and the objective function is  convex (see Burachik et al. \cite{Bur1995}) or, more generally, a quasi-convex function (see Kiwiel and Murty \cite{Kiwiel1996}, Bello Cruz and Luc\^ambio P\'erez \cite{BeloCruz2010}). This result has been generalized for convex vectorial optimization by Gra\~na Drummond and Svaiter \cite{Benar2005} (see also Gra\~na Drummond and Iusem~\cite{Iusem2004}, Fukuda and Gra\~na Drummond \cite{Fukuda2011}) and, in the quasi-convex case, for multicriteria optimization by Bento et al.~\cite{Bento2012} (see also Bello Cruz et al.\cite{BeloCruz2011}). For extensions of other scalar optimization methods to the vectorial setting see, for instance, \cite{Benar2005-1, Benar2009, Mordukhovich2010} and references therein.

As far as we know, Bento et al. \cite{Bento2012} presented the first result of full convergence of the exact steepest descent method, with Armijo's rule, for quasi-convex multicriteria optimization, which includes contributions within Euclidean and Riemannian context; see also Bello Cruz et al. \cite{BeloCruz2011}. In the present paper, we study the method proposed by Fliege and Svaiter~\cite{Benar2000}, which is the inexact version of the method presented in \cite{Bento2012}. In this method are admitted relative errors on the search directions, more precisely, an approximation of the exact search direction is computed at each iteration. In this paper, we proved full  convergence of the sequence generated by this inexact method to a critical Pareto point associated to quasi-convex multicriteria optimization problems. In particular, we proved full convergence of the sequence to a weak Pareto optimal point in the case that the objective function is pseudo-convex.

The organization of our paper is as follows: In Section~\ref{sec1}, we present the self
regulation problem in the context of Psychology. In Section~\ref{sec2}, the multicriteria problem, the first order optimality condition for it and some basic definitions are presented. In Section~\ref{sec3}, the inexact steepest  descent method for finding one solution of  multicriteria problems is stated and the well-definedness of the sequence generated for it is established. In Section~\ref{sec4}, a partial convergence result for continuous differentiability multicriteria optimization is presented without any additional assumption on the objective function. Moreover, assuming that the objective function be quasi-convex and the Riemannian manifold has non-negative curvature, a full convergence result is presented. Finally,  Section~\ref{sec5} offers a ``distal-proximal" model of self regulation in Psychology, using a recent variational rationality approach (\cite{Soubeyran2009, Soubeyran2010, Soubeyran2012a, Soubeyran2012b}) which modelizes behaviors as an approach, or avoidance, course poursuit between ``desired, or undesired enough" ends, and ``feasible enough" means.

\section{The Self Regulation Problem}\label{sec1}

In this section devoted to applications we direct the attention of the
reader to the very important ``multiple goals" self regulation problem in
Behavioral sciences. We show the strong link between: $i)$ our paper which
extends the steepest descent methods of Fliege and Svaiter \cite{Benar2000}
to the quasiconvex case in multicriteria optimization and $ii)$ the
``variational rationality" approach of the ``theories of change" of
Soubeyran \cite{Soubeyran2009, Soubeyran2010, Soubeyran2012a, Soubeyran2012b}.%
Change problems consider ``why, how, and when"\ it is worthwhile to move
from a bad or not so good situation $x\in X$ to a better one $y\in X$ (known
or unknown), the limit case of full rationality being an optimizing one, the
case of bounded rationality being a better one (in a lot of different
formulations, depending of the context). The variational rationality
approach examines two polar kinds of ``change problems", choice and
transformation problems: i) adaptive choice problems like the ``choosing the
context to choose" problem (the formation of consideration sets), and ii)
transformation problems like creation and destruction, invention,
innovation, the evolution of institutions, dynamics interactions (dynamic
games), health, behavioral, organizational and cultural changes, $\ldots$ in
Economics, Decision theory, Management, Psychology, Artificial Intelligence,
Philosophy, Sociology, Applied Mathematics (Variational Analysis,
Optimization and Variational Inequalities). In this ``variational context"
our present paper shows how setting joint distal and proximal goals greatly
help to reach a distal goal. It offers an ``aspiration driven local search
proximal algorithm". This variational approach emphasizes, each step of the
process, two main variational principles (among others, in more general
settings): a ``satisficing-but not too much sacrificing" principle and a
``worthwhile to change" principle. Because the state space of situations is
the Euclidian space $X=\mathbb{R}^{n}$, changes $u=$ $y-x$ from a given
situation $x$ to a hopefully better situation $y$ can be characterized by
their directions $v\in X$ and their depth (length) $t>0$. In this context
where $u=tv$ these two variational principles specialize to,

\begin{itemize}
\item[i)] the choice, each step, of a ``satisficing but not too much
sacrificing" direction (a directional ``satisficing- but not too much
sacrificing" principle);

\item[ii)] the choice, each step, of a ``worthwhile change" (a ``worthwhile
to change" step length principle).
\end{itemize}

\subsection{Self regulation problems}

Self regulation considers the systematic activities (efforts) made to direct
thoughts, feelings, and actions, towards the attainment of one's goals
(Zimmerman \cite{Zimmerman2000}). A goal is a conscious or unconscious
mental representation of some future end (to approach or to avoid), more or
less distant, abstract (concrete), vague (precise), desirable and feasible.

Goals can be more or less desirable and more or less feasible. Related
to the desirability aspects are conscious or not, vague or concrete, distal
or proximal, long term or short term, extrinsic or intrinsic, set by others or oneself, individual or collective, learning or performance oriented
contents, high or low in commitment \ldots. Related to the feasibility aspects
are importance, priority, urgency, direction, intensity, difficulty,
measurability,

Self regulation have two aspects. The positive side of self regulation
considers purposive processes where agents engage in goal-directed actions. It examines goal setting, goal striving
and goal pursuit processes.

- Goal setting is the mental process of moving from the consideration of
distal goals to the formation of more proximal goals. Distal goals are
desired future ends (visions, imaginated desired futures), either promotion
aspirations \ (like ideals, fantasies, dreams, wishes, hopes and challenges)
or prevention aspirations (like oughts and obligations). They represent
desirable but quite irrealistic distal and vague ends (higher order goals).
Proximal goals can be wants, intentions, task goals, i.e much more feasible
but less desirable intermediate ends (sub goals).

- Goal striving ( goal implementation) examines the transition phase between
setting a distal goal and reaching it. 

- Goal pursuit (goal revision) focuses on the final phase, after reaching
the given goal or failing to reach it. It examines the role of feedbacks
(self evaluations of successes and failures, including the revision of
causal attributions and self efficacy beliefs, see Tolli and Schmidt \cite{Tolli2008}) in order to revise goals. 

The negative side of self regulation considers what an agent must refrain to
do instead of what he must do to set and attain some given goal. This
negative aspect of self regulation is named self control (overriding of one
action tendency in order to attain an other goal). It considers self
regulation failures like lack of vision, the inability to transform
irrealistic aspirations into intentions and realistic proximal goals
(preparation to action problems), procrastination and inertia (starting
problems), interruptions, distractions, temptations, lack of feedbacks, lack
of interest, perseverance and motivation (on the track problems) and goal
disengagement (ends problems).

Our paper considers only the positive aspect of self regulation. It focuses
on proximal goal setting activities, examines some aspects of goal revision
activities, and renounce to consider goal striving activities.

\subsection{Setting proximal goals}

\noindent\textbf{The Michael Jordan ``step by step" principle:} The famous basketball player Michael Jordan wrote the following about goal
setting in his book (Jordan and Miller \cite{Jordan1994}), \textquotedblleft
I approach everything step by step \ldots . I had always set short-term
goals. As I look back, each one of the steps or successes led to the next
one. When I got cut from the varsity team as a sophomore in high school, I
learned something. I knew I never wanted to feel that bad again \ldots . So
I set a goal of becoming a starter on the varsity. That's what I focused on
all summer. When I worked on my game, that's what I thought about. When it
happened, I set another goal, a reasonable, manageable goal that I could
realistically achieve if I worked hard enough \ldots . I guess I approached
it with the end in mind. I knew exactly where I wanted to go, and I focused
on getting there. As I reached those goals, they built on one another. I
gained a little confidence every time I came through \ldots .

\noindent\textbf{Goal hierarchies and goal proximity: the Bandura \ dual
``proximal-distal" self regulation principle:} Bandura \cite{Bandura1997} argued that people possess multiple systems of
goals, hierarchically arranged from proximal goals to extreme distal goals.
Goal proximity defines ``how far goals are conceptualized into the future".
A goal hierarchy interconnects at least three levels of goals: peak goals
(higher order goals, like visions, dreams, fantasies, aspirations, ideals,
wishes, hopes), distal goals (challenges), and task goals\ldots. A subset of task goals can be subordinate to distal goals which can
be subordinate to peak goals. Hence, the proximal goal distinction is
relative to the interconnected network of goals, other goal's providing the
temporal context). The main point to be emphasized is that distal goals and
proximal goals serve different and complementary conative functions
(connected to cognition, affect and motivation) related to goal difficulty,
goal commitment, psychological distance\ldots.

\begin{itemize}
\item[i)] distal goals define desired ends (enduring aspirations) that
attract individuals;

\item[ii)] proximal goals regulate immediate conative functions, which
provide the ways to find and follow a path of step by step changes moving
from the initial situation to approach the desired end or avoid an
undesirable end. In this context it is important to distinguish task goals
and strategies. The former defines what is to be accomplished, and the later
defines how it is to be accomplished (Wood and Bandura \cite{Wood1989}).
\end{itemize}

%%%%%%%%%%%%%%%%%%%%%%%%%%%%%%%%%%%%%%%
\section{The Multicriteria Problem}\label{sec2}
%%%%%%%%%%%%%%%%%%%%%%%%%%%%%%%%%%%%%%%
In this section, we present the multicriteria problem, the first order optimality condition for it and some basic definitions.

Let $I:=\{1,\ldots,m\}$, ${\mathbb R}^{m}_{+}=\{x\in {\mathbb R}^m: x_{i}\geq 0, j\in I\}$ and ${\mathbb R}^{m}_{++}=\{x\in {\mathbb R}^m: x_{j}> 0, j\in I \}$.
For $x, \, y \in {\mathbb R}^{m}_{+}$,
$y\succeq x$ (or $x \preceq y$) means that $y-x \in {\mathbb R}^{m}_{+}$
and $y\succ x$ (or $x \prec y$) means that
$y-x \in {\mathbb R}^{m}_{++}$.

Given a continuously differentiable vector function $F:{\mathbb R}^n\to {\mathbb R}^m$, we consider the problem of finding a {\it optimum Pareto  point} of F, i.e., a point $x^*\in {\mathbb R}^n$ such that there exists no other $x\in {\mathbb R}^n$ with $F(x)\preceq F(x^*)$ and $F(x) \neq F(x^*)$.  We denote this unconstrained problem as
\begin{equation} \label{eq:mp}
\min_{x\in {\mathbb R}^n} F(x).
\end{equation}
Let $F$ be given by $F(x):=\left(f_1(x), \ldots, f_m(x)\right)$. We denote the jacobian of $F$ by  
\[
J F(x):=\left(\nabla f_1(x), \ldots, \nabla f_m(x) \right), \qquad x\in {\mathbb R}^n,
\]
and the image of the jacobian of $F$ at a point $x\in {\mathbb R}^n$  by
\[
\mbox{Im} (J F(x)):=\left\{ J F(x)v=(\langle \nabla f_1(x),v\rangle, \ldots, \langle \nabla  f_m(x),v\rangle) : v\in \mathbb{R}^n \right\}.
\]
Using the above equality, the first-order optimality condition for Problem \ref{eq:mp} (see, for instance, \cite{Benar2000}) is stated as 
\begin{equation} \label{eq:oc} 
x\in {\mathbb R}^n, \qquad \mbox{Im}(JF(x))\cap (-{\mathbb R}^{m}_{++})=\emptyset.
\end{equation}
Note that the  condition in (\ref{eq:oc})  generalizes, to multicriteria optimization, the classical
condition ``gradient equals zero"  for the real-valued case.

In general, (\ref{eq:oc}) is necessary, but not sufficient, for optimality. A point of $\mathbb{R}^n$ satisfying (\ref{eq:oc}) is called {\it critical Pareto point}.

%%%%%%%%%%%%%%%%%%%%%%%%%%%%%%%%%%%%%%%%%%%%%%%%%%%%%%%%%%%%%%%%%%%
\section{Inexact Steepest Descent Methods for Multicriteria Problems}\label{sec3}
%%%%%%%%%%%%%%%%%%%%%%%%%%%%%%%%%%%%%%%%%%%%%%%%%%%%%%%%%%%%%%%%%%%
In this section, we state the inexact steepest  descent methods for solving  multicriteria problems admitting relative errors in the  search directions, more precisely, an approximation of the exact search direction is computed at each iteration, as considered in \cite{Benar2000, Benar2005, Fukuda2011}.

Let $x\in {\mathbb R}^n$ be a point which  is not critical Pareto point. Then there exists a direction $v\in \mathbb{R}^n$ satisfying
\[
J F(x)v \in -{\mathbb R}^{m}_{++},
\]
that is,  $J F(x)v \prec 0$. In this case, $v$ is called a {\it descent direction} for $F$ at $x$. 

For each $x\in {\mathbb R}^n$,  we consider the following unconstrained optimization problem in $\mathbb{R}^n$
\begin{equation} \label{eq:dsdd}
\mathop{\min}_{v\in \mathbb{R}^n} \; \left\{\mbox{max}_{i\in I}\langle\nabla f_i(x),v\rangle+ (1/2)\|v\|^{2}\right\}, \quad \qquad I:=\{1,\ldots,m\}.
\end{equation}

\begin{lemma}\label{dir:md1}$ $
The following statements hold:
\begin{itemize}
\item[i)] The unconstrained optimization problem in (\ref{eq:dsdd}) has only one solution. Moreover, the vector $v$ is the solution of the problem in (\ref{eq:dsdd}) if and only if there exists  $\alpha_i\geq 0$, $i\in I(x,v)$, such that 
\[
v=-\sum\limits_{i\in I(x,v)}\alpha_i\nabla f_i(x), \qquad \sum\limits_{i\in I(x,v)}\alpha_i=1,
\]
where $I(x,v):=\{i\in I: \langle\nabla f_i(x),v \rangle=\max_{i\in I}\langle\nabla f_i(x),v \rangle\}$;
\item[ii)] If $x$ is critical Pareto point of $F$ and $v$ denotes the solution of the problem in (\ref{eq:dsdd}), then $v=0$ and the optimal value associated to $v$ is equal to zero;
\item[iii)] If $x\in {\mathbb R}^n$ is not a critical Pareto point of $F$ and  $v$ is the solution of the problem in (\ref{eq:dsdd}), then $v\neq 0$ and
\[
\max_{i\in I}\langle\nabla f_i(x),v\rangle + (1/2)\|v\|^{2}<0.
\]
In particular, $v$ is a descent direction for $F$ at $x$.
\end{itemize}
\end{lemma}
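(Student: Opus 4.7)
The plan is to treat the three items as a short chain, with part (i) doing most of the real work.

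For (i), I would first observe that the function $\varphi(v):=\max_{i\in I}\langle \nabla f_i(x),v\rangle+\tfrac{1}{2}\|v\|^{2}$ is strongly convex: the pointwise max of finitely many linear functions is convex, and $\tfrac{1}{2}\|v\|^{2}$ is $1$-strongly convex. Strong convexity plus coercivity gives existence and uniqueness of the minimizer, so the first sentence of (i) is immediate. For the characterization, I would rewrite the problem in epigraph form as
\begin{equation*}
\min_{(v,t)\in\mathbb{R}^{n}\times\mathbb{R}}\; t+\tfrac{1}{2}\|v\|^{2}\quad\text{s.t.}\quad \langle\nabla f_i(x),v\rangle - t\le 0,\ i\in I,
\end{equation*}
which is convex with affine constraints, so KKT is both necessary and sufficient. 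Writing multipliers $\alpha_i\ge 0$, stationarity in $v$ and $t$ gives $v=-\sum_{i}\alpha_i\nabla f_i(x)$ and $\sum_{i}\alpha_i=1$, while complementary slackness forces $\alpha_i=0$ whenever $i\notin I(x,v)$. Conversely, any such $(v,\alpha)$ satisfies KKT, hence is the unique minimizer. (An equivalent route: compute the subdifferential of $\varphi$ using $\partial\max_i\langle\nabla f_i(x),\cdot\rangle=\operatorname{conv}\{\nabla f_i(x):i\in I(x,v)\}$ at $v$ and write $0\in\partial\varphi(v)$.)

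For (ii), evaluating $\varphi$ at $0$ gives value $0$. Suppose for contradiction the minimum is negative; then there exists $w\in\mathbb{R}^{n}$ with $\max_{i\in I}\langle\nabla f_i(x),w\rangle+\tfrac{1}{2}\|w\|^{2}<0$, which in particular implies $\max_{i}\langle\nabla f_i(x),w\rangle<0$, i.e.\ $JF(x)w\in-\mathbb{R}^{m}_{++}$, contradicting the criticality of $x$ as stated in (\ref{eq:oc}). Hence the optimal value is $0$, and uniqueness from (i) forces $v=0$.

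For (iii), since $x$ is not critical, there exists $w\in\mathbb{R}^n$ with $JF(x)w\in-\mathbb{R}^{m}_{++}$, so $c:=\max_i\langle\nabla f_i(x),w\rangle<0$. For small $\lambda>0$ the vector $\lambda w$ yields $\varphi(\lambda w)=\lambda c+\tfrac{\lambda^{2}}{2}\|w\|^{2}<0$, proving the minimal value is strictly negative; in particular $v\neq 0$ and $\max_i\langle\nabla f_i(x),v\rangle+\tfrac{1}{2}\|v\|^{2}<0$. Since $\|v\|^{2}>0$, this gives $\max_i\langle\nabla f_i(x),v\rangle<-\tfrac{1}{2}\|v\|^{2}<0$, so $JF(x)v\in-\mathbb{R}^{m}_{++}$, i.e.\ $v$ is a descent direction.

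The only step that needs any care is the KKT characterization in (i); the difficulty is purely about justifying a multiplier representation for the nonsmooth max. Either the epigraph reformulation (where Slater or simply affinity of the constraints yields strong duality) or the explicit subdifferential calculus for the max function handles it cleanly, and once this is in place, parts (ii) and (iii) follow directly from the optimality condition (\ref{eq:oc}) together with the scaling argument above.
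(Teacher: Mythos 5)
Your proof is correct. Note, however, that the paper does not actually prove this lemma: it simply refers the reader to Bento et al.\ (2012) for item (i) and to Fliege--Svaiter (2000) for items (ii) and (iii). Your argument supplies the details that those references contain, and it follows the same standard route: strong convexity of $v\mapsto\max_{i}\langle\nabla f_i(x),v\rangle+\tfrac12\|v\|^2$ for existence and uniqueness, the KKT conditions for the epigraph reformulation (or, equivalently, the subdifferential formula $\partial\bigl(\max_i\langle\nabla f_i(x),\cdot\rangle\bigr)(v)=\conv\{\nabla f_i(x):i\in I(x,v)\}$ together with $0\in\partial\varphi(v)$) for the multiplier characterization, and the scaling argument $\varphi(\lambda w)=\lambda c+\tfrac{\lambda^2}{2}\|w\|^2$ to link the sign of the optimal value to criticality. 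All the individual steps check out: the affine constraints make KKT necessary and sufficient without a Slater condition, complementary slackness correctly kills the multipliers outside $I(x,v)$ because the optimal $t$ equals $\max_i\langle\nabla f_i(x),v\rangle$, and in (ii) the inequality $\max_i\langle\nabla f_i(x),w\rangle\le -\tfrac12\|w\|^2\le 0$ does yield the strict negativity needed to contradict criticality. In short, your write-up is a complete, self-contained proof of a result the paper only cites, using essentially the argument of the cited sources.
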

\begin{proof}
The proof of the item  $i$ can be found in \cite{Bento2012}. For the proof of the remaining items, see, for example, \cite{Benar2000}.
\end{proof}

\begin{remark}\label{remark2012-10}
From the item $i$ of Lemma~\ref{dir:md1} we note that the solution of the minimization problem (\ref{eq:dsdd}) is of the form:
\[
v=-JF(x)^{t}w, \quad w=(\alpha_1,\ldots,\alpha_m)\in\mathbb{R}^m_+, \quad \|w\|_1=1\quad (\mbox{sum norm in}\; \mathbb{R}^m),
\]
with $\alpha_i=0$ for $i\in I\setminus I(x,v)$. In other words, if $S:=\{e_i\in \mathbb{R}^m:\; i\in I\}$ (set of the elements of the canonical base of Euclidean space $\mathbb{R}^m$), then $w$ is an element of the convex hull of $S(x,v)$, where
\begin{equation}\label{eq:2012-200}
S(x,v):=\{\bar{u}\in S:\; \langle\bar{u}, JF(x)v \rangle=\max_{u\in S}\langle u,JF(x)v \rangle\}.
\end{equation}
Note that the minimization problem (\ref{eq:dsdd}) may be rewritten as follows:
\[
\mathop{\min}_{v\in \mathbb{R}^n} \; \left\{\max_{u\in S}\langle u, JF(x)v\rangle+ (1/2)\|v\|^{2}\right\}=
\mathop{\min}_{v\in \mathbb{R}^n} \; \left\{\max_{u\in  S}\langle JF(x)^tu, v\rangle+ (1/2)\|v\|^{2}\right\}.
\]

\end{remark}
In view of the previous lemma and (\ref{eq:dsdd}), we define the steepest descent direction function for $F$ as follows.
\begin{definition}\label{def:sddf1}
The steepest descent direction  function for $F$ is defined as
\[
\mathbb{R}^n\ni x\longmapsto v(x):=\mbox{argmin}_{v\in \mathbb{R}^n} \left\{ \max_{i\in I}\left\langle\nabla f_i(x),v\right\rangle + (1/2)\|v\|^{2}\right\}\in \mathbb{R}^n.
\]
\end{definition}
\begin{remark}
This definition was proposed in \emph{\cite{Benar2000}}. Note that, from the item $i$ of \emph{Lemma \ref{dir:md1}} it follows that the steepest descent direction for vector functions becomes the steepest descent direction when $m=1$. 
\end{remark}

The optimal value associated to $v(x)$ will be denoted by $\alpha(x)$. Note that the function 
\[
\mathbb{R}^n\ni x\longmapsto \max_{i\in I}\left\langle\nabla f_i(x),v\right\rangle + (1/2)\|v\|^{2}\in\mathbb{R},
\]
is strongly convex with modulus $1/2$ and 
\[
0\in\partial \left(\max_{i\in I}\langle \nabla f_i(x),\;.\;\rangle+1/2\|.\|^2\right)(v(x)).
\]
So, for all $v\in \mathbb{R}^n$,
\begin{equation}\label{eq:2012}
\max_{i\in I}\left\langle\nabla f_i(x),v\right\rangle + (1/2)\|v\|^{2}-\alpha(x)\geq 1/2\|v-v(x)\|^2.
\end{equation}

\begin{lemma}\label{lemma2012-1}
The steepest descent direction function for $F$, $\mathbb{R}^n\ni x\mapsto v(x) \in \mathbb{R}^n$, is continuous. In particular, the function $\mathbb{R}^n\ni x\mapsto\alpha(x)\in\mathbb{R}$ is also continuous.
\end{lemma}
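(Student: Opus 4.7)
The plan is to prove continuity of $v(\cdot)$ first, and then deduce continuity of $\alpha(\cdot)$ as an immediate consequence. The central tool will be the strong convexity inequality (\ref{eq:2012}), which provides quadratic control of the distance $\|v - v(x)\|$ by the gap in objective values. This will allow a fully quantitative argument, bypassing the need for a subsequential compactness/uniqueness argument.

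First, I would show that for any sequence $x_{k}\to x$ in $\mathbb{R}^n$, the sequence $\{v(x_{k})\}$ stays bounded. Since $v(x_k)$ minimizes $\phi_{x_{k}}(v):=\max_{i\in I}\langle\nabla f_i(x_{k}),v\rangle+(1/2)\|v\|^{2}$, comparison with the trial vector $v=0$ yields $\phi_{x_{k}}(v(x_{k}))\le 0$, and Cauchy--Schwarz then gives $\|v(x_{k})\|\le 2\max_{i\in I}\|\nabla f_i(x_{k})\|$. Continuity of the $\nabla f_i$ makes the right-hand side bounded on any compact neighborhood of $x$, which also bounds $\|v(x)\|$ itself.

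Next comes the key step. I would apply (\ref{eq:2012}) twice: at $x_{k}$ with test vector $v(x)$, and at $x$ with test vector $v(x_{k})$. Writing $\phi_{y}(w):=\max_{i\in I}\langle\nabla f_i(y),w\rangle+(1/2)\|w\|^{2}$ and noting that the quadratic term cancels when one subtracts $\alpha(x)$ or $\alpha(x_k)$ from $\phi_{\cdot}(\cdot)$ at the appropriate point, adding the two inequalities gives
\[
\|v(x_{k})-v(x)\|^{2}\le \bigl[\max_{i}\langle\nabla f_i(x_{k}),v(x)\rangle-\max_{i}\langle\nabla f_i(x),v(x)\rangle\bigr]+\bigl[\max_{i}\langle\nabla f_i(x),v(x_{k})\rangle-\max_{i}\langle\nabla f_i(x_{k}),v(x_{k})\rangle\bigr].
\]
Each bracket is controlled by $\max_{i\in I}\|\nabla f_i(x_{k})-\nabla f_i(x)\|$ multiplied by $\|v(x)\|$ or $\|v(x_{k})\|$ (using the elementary estimate $|\max_i\langle a_i,w\rangle-\max_i\langle b_i,w\rangle|\le \max_i\|a_i-b_i\|\,\|w\|$). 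Combined with the boundedness established in the first step and continuity of the gradients $\nabla f_i$, the right-hand side tends to $0$ as $x_k\to x$, proving $v(x_k)\to v(x)$.

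For the second claim, continuity of $\alpha$ is immediate: $\alpha(x)=\max_{i\in I}\langle\nabla f_i(x),v(x)\rangle+(1/2)\|v(x)\|^{2}$ is a composition of the continuous maps $x\mapsto\nabla f_i(x)$, $x\mapsto v(x)$, the inner product, the maximum, and the squared norm. The step I expect to be the main obstacle is the symmetrization trick in the second paragraph: recognizing that summing the two strong-convexity inequalities causes the quadratic terms in $v(x)$ and $v(x_k)$ to cancel and produces an estimate whose right-hand side depends only on differences of the gradients. Once this is in place, the boundedness argument and the $\max$-Lipschitz estimate are routine.
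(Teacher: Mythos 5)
Your proof is correct and complete. Note that the paper does not actually prove this lemma: for the continuity of $x\mapsto v(x)$ it simply cites Bento et al.\ \cite{Bento2012}, and then observes that continuity of $\alpha(\cdot)$ follows. So your argument is, by construction, a different and self-contained route. The two ingredients both check out: the comparison $\phi_{x_k}(v(x_k))\le\phi_{x_k}(0)=0$ plus Cauchy--Schwarz does give $\|v(x_k)\|\le 2\max_{i\in I}\|\nabla f_i(x_k)\|$, hence local boundedness; and summing the two instances of (\ref{eq:2012}) (at $x_k$ tested against $v(x)$, and at $x$ tested against $v(x_k)$), after substituting $\alpha(x_k)=\phi_{x_k}(v(x_k))$ and $\alpha(x)=\phi_x(v(x))$, does cancel the quadratic terms and yields exactly your displayed bound
\[
\|v(x_k)-v(x)\|^2\le \max_{i\in I}\|\nabla f_i(x_k)-\nabla f_i(x)\|\,\bigl(\|v(x)\|+\|v(x_k)\|\bigr),
\]
via the standard Lipschitz estimate for the pointwise maximum. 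What your approach buys over the usual boundedness--subsequence--uniqueness argument (the style of proof one would expect in \cite{Bento2012}) is an explicit modulus of continuity for $v(\cdot)$ in terms of that of the gradients $\nabla f_i$; in particular it shows $v(\cdot)$ is locally H\"older of exponent $1/2$ whenever the $\nabla f_i$ are locally Lipschitz, which the compactness argument does not give. The deduction of continuity of $\alpha(\cdot)$ as a composition of continuous maps is the same as the paper's.
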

\begin{proof}
See \cite{Bento2012} for the proof of the first part. The second part is a immediate consequence of the first.
\end{proof}

\begin{definition}\label{def:2012}
Let $\sigma\in [0,1)$. A vector $v\in \mathbb{R}^n$ is say be a $\sigma-$approximate steepest descent direction at $x$ for $F$ if
\[
\max_{1\leq i\leq m}\langle\nabla f_i(x),v \rangle+1/2\|v\|^2\leq(1-\sigma)\alpha(x).
\]
\end{definition}
Note that the exact steepest descent direction at $x$ is a $\sigma$-approximate steepest descent direction for $F$ with $\sigma=0$. As a immediate consequence of Lemma~\ref{dir:md1} together with last definition, it is possible to prove the following:
\begin{lemma}\label{lemma2012}
Given $x\in {\mathbb R}^n$,
\begin{itemize}
\item[a)] $v=0$ is a $\sigma$-approximate steepest descent direction at $x$ if, only if, $x$ is a critical Pareto point;
\item[b)] if $x$ is not a critical Pareto point and $v$ is a $\sigma$-approximate steepest descent direction at $x$, then $v$ is a descent direction for $F$.
\end{itemize}
\end{lemma}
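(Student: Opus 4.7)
The proof should follow quickly from the definition of $\sigma$-approximate steepest descent direction combined with the characterizations in Lemma \ref{dir:md1}. The key preliminary observation is that $\alpha(x) \leq 0$ for every $x \in \mathbb{R}^n$, since plugging $v = 0$ into the objective of (\ref{eq:dsdd}) gives the value $0$, so the infimum cannot be positive. Together with Lemma \ref{dir:md1}(ii)--(iii) this means $\alpha(x) = 0$ exactly when $x$ is a critical Pareto point, and $\alpha(x) < 0$ otherwise.

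For part (a), I would argue in both directions. If $x$ is critical Pareto, then $\alpha(x) = 0$ by Lemma \ref{dir:md1}(ii), and substituting $v = 0$ into the defining inequality of Definition \ref{def:2012} yields $0 \leq (1-\sigma)\cdot 0$, so $v = 0$ is $\sigma$-approximate. Conversely, if $v = 0$ qualifies as a $\sigma$-approximate direction at $x$, then Definition \ref{def:2012} reduces to $0 \leq (1-\sigma)\alpha(x)$. Since $\sigma \in [0,1)$ gives $1-\sigma > 0$, we obtain $\alpha(x) \geq 0$; combined with $\alpha(x) \leq 0$ this forces $\alpha(x) = 0$, and Lemma \ref{dir:md1}(iii) (in its contrapositive form) concludes that $x$ is a critical Pareto point.

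For part (b), assume $x$ is not critical Pareto and $v$ is a $\sigma$-approximate steepest descent direction. By Lemma \ref{dir:md1}(iii) we have $\alpha(x) < 0$, and since $1 - \sigma > 0$, this yields $(1-\sigma)\alpha(x) < 0$. Using $(1/2)\|v\|^2 \geq 0$ in the defining inequality gives
\[
\max_{1 \leq i \leq m}\langle \nabla f_i(x), v\rangle \;\leq\; (1-\sigma)\alpha(x) - \tfrac{1}{2}\|v\|^2 \;\leq\; (1-\sigma)\alpha(x) \;<\; 0,
\]
so $\langle \nabla f_i(x), v\rangle < 0$ for every $i \in I$, i.e., $JF(x)v \in -\mathbb{R}^m_{++}$. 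Hence $v$ is a descent direction for $F$ at $x$.

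There is no real obstacle here; the entire argument is a two-step manipulation of the defining inequality once one records the basic sign information $\alpha(x) \leq 0$ and the dichotomy $\alpha(x) = 0$ vs.\ $\alpha(x) < 0$ from Lemma \ref{dir:md1}. The only point that deserves a moment of care is that $\sigma < 1$ strictly, which is what keeps $1 - \sigma$ positive and thereby transfers the sign of $\alpha(x)$ to the right-hand side in both parts.
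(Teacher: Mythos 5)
Your proof is correct and follows exactly the route the paper intends: the paper states this lemma as an ``immediate consequence of Lemma~\ref{dir:md1} together with Definition~\ref{def:2012}'' and omits the details, and your argument (recording $\alpha(x)\leq 0$ always, with $\alpha(x)=0$ iff $x$ is critical Pareto, then manipulating the defining inequality with $1-\sigma>0$) is precisely the fleshed-out version of that claim. No gaps.
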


Next lemma establishes the degree of proximity between an approximate direction $v$ and the exact direction $v(x)$, in terms of the optimal value $\alpha(x)$.
\begin{lemma}\label{lemma2012-3}
Let $\sigma\in [0,1)$. If $v\in \mathbb{R}^n$ is a $\sigma-$approximate steepest descent direction at $x$, then
\[
\|v-v(x)\|^2\leq 2\sigma|\alpha(x)|.
\]
\end{lemma}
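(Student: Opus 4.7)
The plan is to combine the strong convexity inequality (\ref{eq:2012}) with the defining inequality of a $\sigma$-approximate steepest descent direction, after first observing that $\alpha(x)\leq 0$ so that $|\alpha(x)|=-\alpha(x)$.

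First I would note that, by item ii) of Lemma~\ref{dir:md1} applied at a critical Pareto point, and more generally by evaluating the objective of (\ref{eq:dsdd}) at $v=0$, we have $\alpha(x)\leq 0$ for every $x\in\mathbb{R}^n$. Hence $|\alpha(x)|=-\alpha(x)$, which will let me convert expressions involving $-\alpha(x)$ into $|\alpha(x)|$ in the final bound.

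Next, I would invoke inequality (\ref{eq:2012}) with the given $\sigma$-approximate direction $v$: since the map $w\mapsto \max_{i\in I}\langle\nabla f_i(x),w\rangle+(1/2)\|w\|^2$ is strongly convex with modulus $1/2$ and attains its minimum $\alpha(x)$ at $v(x)$, we get
\[
\max_{i\in I}\langle\nabla f_i(x),v\rangle+(1/2)\|v\|^2-\alpha(x)\geq \frac{1}{2}\|v-v(x)\|^2.
\]
On the other hand, the definition of a $\sigma$-approximate steepest descent direction (Definition~\ref{def:2012}) gives
\[
\max_{i\in I}\langle\nabla f_i(x),v\rangle+(1/2)\|v\|^2\leq (1-\sigma)\alpha(x),
\]
so subtracting $\alpha(x)$ from both sides yields
\[
\max_{i\in I}\langle\nabla f_i(x),v\rangle+(1/2)\|v\|^2-\alpha(x)\leq -\sigma\alpha(x)=\sigma|\alpha(x)|.
\]
Chaining the two inequalities gives $(1/2)\|v-v(x)\|^2\leq \sigma|\alpha(x)|$, and multiplying by $2$ produces the claimed bound.

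There is no real obstacle here: the only subtlety is the sign bookkeeping needed to replace $-\alpha(x)$ by $|\alpha(x)|$, which requires the (trivial) observation that $\alpha(x)\leq 0$. The heavy lifting was already done when establishing the strong-convexity inequality (\ref{eq:2012}); this lemma is essentially a direct consequence of that estimate combined with the defining inequality of Definition~\ref{def:2012}.
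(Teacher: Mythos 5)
Your argument is correct and is precisely the route the paper indicates: combining the strong-convexity estimate (\ref{eq:2012}) at the minimizer $v(x)$ with the defining inequality of Definition~\ref{def:2012}, together with the observation that $\alpha(x)\leq 0$ so that $-\sigma\alpha(x)=\sigma|\alpha(x)|$. You have simply written out the details that the paper's one-line proof delegates to \cite{Benar2005}.
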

\begin{proof}
The proof follows from (\ref{eq:2012}) combined with Definition~\ref{def:2012}. See \cite{Benar2005}.
\end{proof}
 
 A particular class of $\sigma$- approximate steepest descent directions for $F$ at $x$ is given by the directions $v\in \mathbb{R}^n$ which are {\it scalarization compatible}, i.e., such that there exists $\tilde{w}\in \mbox{conv} S$ with
 \begin{equation}\label{eq:2012-350}
 v=-JF(x)^t\tilde{w}.
 \end{equation}
Note that $\tilde{w}$ determines a scalar function $g(x):=\langle\tilde{w},F(x)\rangle$ whose steepest descent direction coincides with $v$, which justifies the name previously attributed to the direction $v$; see \cite{Benar2005} for a good discussion.

Next proposition establishes a sufficient condition for $v$, given as in (\ref{eq:2012-350}), to be a $\sigma$-approximate steepest descent direction for $F$ at $x$. 
\begin{proposition}
Let $\sigma\in [0,1)$ and $v$ as in (\ref{eq:2012-350}). If 
\[
\max_{i\in I}\langle \nabla f_i(x),v\rangle\leq -(1-\sigma/2)\|v\|^2,
\]
or equivalently,
\begin{equation}\label{}
\max_{u\in S}\langle JF(x)^tu,v\rangle\leq -(1-\sigma/2)\|v\|^2,
\end{equation}
then $v$ is a $\sigma$-approximate steepest descent direction for $F$ at $x$.
\end{proposition}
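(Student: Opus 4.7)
The plan is to show that the hypothesis rearranges to $\max_i \langle \nabla f_i(x),v\rangle + \tfrac{1}{2}\|v\|^2 \leq -\tfrac{1-\sigma}{2}\|v\|^2$, and then to establish the lower bound $\alpha(x) \geq -\tfrac{1}{2}\|v\|^2$, so that multiplying by $(1-\sigma)\geq 0$ yields the target inequality in Definition~\ref{def:2012}. The arithmetic part is trivial; the content of the proposition lies entirely in that lower bound on $\alpha(x)$.

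First I would do the cosmetic rearrangement: the assumption $\max_{i\in I}\langle \nabla f_i(x),v\rangle \leq -(1-\sigma/2)\|v\|^2$ gives
\[
\max_{i\in I}\langle \nabla f_i(x),v\rangle + \tfrac{1}{2}\|v\|^2 \;\leq\; -\tfrac{1-\sigma}{2}\|v\|^2.
\]
This is manifestly $\leq 0$, consistent with $v$ being a descent direction. The remaining task is to bound the right-hand side above by $(1-\sigma)\alpha(x)$; since $1-\sigma\geq 0$, it suffices to prove
\[
-\tfrac{1}{2}\|v\|^2 \;\leq\; \alpha(x).
\]

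For this key step I would exploit the scalarization-compatibility $v=-JF(x)^t\tilde w$ with $\tilde w\in\conv S$. By the reformulation in Remark~\ref{remark2012-10}, for every $u\in\conv S$ and every $z\in\mathbb{R}^n$ we have $\max_{i\in I}\langle\nabla f_i(x),z\rangle = \max_{u'\in S}\langle JF(x)^tu',z\rangle \geq \langle JF(x)^t u, z\rangle$. Applying this with $u=\tilde w$ inside the definition of $\alpha(x)$ gives
\[
\alpha(x) = \min_{z\in\mathbb{R}^n}\Bigl\{\max_{i\in I}\langle\nabla f_i(x),z\rangle + \tfrac{1}{2}\|z\|^2\Bigr\} \;\geq\; \min_{z\in\mathbb{R}^n}\Bigl\{\langle JF(x)^t\tilde w,z\rangle + \tfrac{1}{2}\|z\|^2\Bigr\} = -\tfrac{1}{2}\|JF(x)^t\tilde w\|^2 = -\tfrac{1}{2}\|v\|^2,
\]
where the inner unconstrained quadratic minimum is attained at $z=-JF(x)^t\tilde w=v$. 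This is precisely the bound we need.

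Combining the two inequalities yields $\max_{i\in I}\langle\nabla f_i(x),v\rangle + \tfrac{1}{2}\|v\|^2 \leq -\tfrac{1-\sigma}{2}\|v\|^2 \leq (1-\sigma)\alpha(x)$, which is Definition~\ref{def:2012}. The only subtlety — and what I would flag as the main conceptual step — is the estimate $\alpha(x)\geq -\tfrac{1}{2}\|v\|^2$: it is what makes scalarization-compatible directions automatically compare favorably with the exact steepest descent direction, and it uses in an essential way that $\tilde w$ lies in $\conv S$ (so that the pointwise maximum over $S$ dominates the linear functional $\langle JF(x)^t\tilde w,\cdot\rangle$). Everything else is a one-line calculation.
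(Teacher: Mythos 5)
Your proof is correct: the rearrangement of the hypothesis is right, and the key estimate $\alpha(x)\geq -\tfrac{1}{2}\|v\|^2$ for scalarization-compatible $v=-JF(x)^t\tilde w$ follows exactly as you argue, since $\tilde w\in\conv S$ makes $\langle JF(x)^t\tilde w,z\rangle\leq\max_{i\in I}\langle\nabla f_i(x),z\rangle$ for all $z$ and the inner quadratic minimum is attained at $z=v$. The paper itself gives no proof (it only cites Gra\~na Drummond and Svaiter), and your argument is the standard one from that reference, so you have in effect supplied the omitted details.
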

\begin{proof}
See \cite{Benar2005}.
\end{proof}

From Remark~\ref{remark2012-10} we note that, for each $x\in {\mathbb R}^n$, the steepest descent direction for $F$ at $x$, $v(x)$, is scalarization compatible. Next lemma tell us that $v(x)$ satisfies the sufficient condition of the last proposition with $\sigma=0$ and, hence, that such condition is natural.

\begin{lemma} The following statements hold:
\begin{itemize}
\item[i)] $\alpha(x)=-(1/2)\|v(x)\|^2$;
\item[ii)] $\max_{u\in S}\langle JF(x)^tu,v(x)\rangle=-\|v(x)\|^2$.
\end{itemize}
\end{lemma}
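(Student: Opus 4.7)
The plan is to read off both statements directly from the KKT-type characterization of $v(x)$ given in item $(i)$ of Lemma~\ref{dir:md1}. That lemma tells us that
\[
v(x) = -\sum_{i \in I(x,v(x))} \alpha_i \nabla f_i(x), \qquad \sum_{i \in I(x,v(x))} \alpha_i = 1, \quad \alpha_i \geq 0,
\]
where $I(x,v(x))$ is the set of indices $i$ achieving $\max_{j\in I}\langle \nabla f_j(x), v(x)\rangle$.

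For item $(i)$, I would first observe that on the active set every $i \in I(x,v(x))$ gives the same inner product $\langle \nabla f_i(x), v(x)\rangle = \max_{j\in I}\langle \nabla f_j(x), v(x)\rangle$. Therefore, taking the convex combination with the weights $\alpha_i$, one has
\[
\max_{j \in I}\langle \nabla f_j(x), v(x)\rangle = \sum_{i \in I(x,v(x))} \alpha_i \langle \nabla f_i(x), v(x)\rangle = \Bigl\langle \sum_{i\in I(x,v(x))}\alpha_i \nabla f_i(x), v(x)\Bigr\rangle = -\|v(x)\|^2.
\]
Substituting this into the definition $\alpha(x) = \max_{i\in I}\langle \nabla f_i(x), v(x)\rangle + (1/2)\|v(x)\|^2$ yields $\alpha(x) = -(1/2)\|v(x)\|^2$.

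For item $(ii)$, recall from Remark~\ref{remark2012-10} that $S$ is the canonical basis of $\mathbb{R}^m$, so $\langle JF(x)^t e_i, v(x)\rangle = \langle \nabla f_i(x), v(x)\rangle$. Hence $\max_{u\in S}\langle JF(x)^t u, v(x)\rangle = \max_{i\in I}\langle \nabla f_i(x), v(x)\rangle$, which I already computed to equal $-\|v(x)\|^2$ in the previous step. No real obstacle is expected: the only mildly delicate point is the observation that the common value on the active index set coincides with its convex combination, which immediately activates the KKT relation $\sum \alpha_i \nabla f_i(x) = -v(x)$.
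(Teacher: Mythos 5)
Your proof is correct and follows essentially the same route as the paper: both exploit that every index in the active set $I(x,v(x))$ attains the common maximal inner product, so that the convex combination with the KKT weights equals that maximum, and then the relation $\sum_i \alpha_i \nabla f_i(x) = -v(x)$ turns it into $-\|v(x)\|^2$. The paper merely writes this in the equivalent $w \in \conv S(v(x))$, $v(x) = -JF(x)^t w$ notation of Remark~\ref{remark2012-10} rather than with the explicit multipliers $\alpha_i$.
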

\begin{proof}
In order to prove the item $i$ note that  
\begin{equation}\label{eq:2012-301}
\alpha(x)=\max_{u\in  S}\langle JF(x)^tu, v(x)\rangle+ (1/2)\|v(x)\|^{2}.
\end{equation}
Moreover, from Remark~\ref{remark2012-10}, we have
\begin{equation}\label{eq:2012-302}
v(x)=-JF(x)^tw, \qquad  w\in\mbox{conv} S(v(x)),\qquad S(v(x)):=S(x,v(x)),
\end{equation}
where  \mbox{conv}$S(v(x))$ denotes the convex hull of $S(v(x))$. So, combining (\ref{eq:2012-301}) and (\ref{eq:2012-302}) with the definition of $S(v(x))$, we get
\[
\alpha(x)=\langle JF(x)^t\bar{u}, -JF(x)^tw\rangle+ (1/2)\|JF(x)^tw\|^{2}, \qquad \bar{u}\in S(v(x)).
\]
Hence, 
\[
\alpha(x)=\langle JF(x)^tw, -JF(x)^tw\rangle+ (1/2)\|JF(x)^tw\|^{2},
\]
from where it follows the item $i$. The item $ii$ is an immediate consequence of the item $i$ combined with (\ref{eq:2012-301}).
\end{proof}

The {\it inexact steepest descent method with the Armijo rule} for solving the unconstrained  optimization problem (\ref{eq:mp}) is as follows:

\begin{method}[Inexact steepest descent method with Armijo rule]\label{algor1}
\hfill \vspace{.5cm}

\noindent
{\sc Initialization.} Take $\beta\in (0,\,1)$ and $x^0\in {\mathbb R}^n$. Set $k=0$.\\
{\sc Stop criterion.} If $x^k$ is a critical Pareto point STOP. Otherwise.\\
{\sc Iterative Step.} Compute a $\sigma$- approximate steepest descent direction $v^{k}$ for $F$ at $x^k$ and the steplength $t_k \in ]0,1]$ as follows:
\begin{equation} \label{eq:sl}
 t_k := \max \left\{2^{-j} : j\in {\mathbb N},\, F \left(x^k+2^{-j}v^k)\right)\preceq  F(x^k)+ \beta 2^{-j}\,J F(x^k)v^k\right\},
\end{equation}
and set
\begin{equation} \label{eq:xk}
x^{k+1}:=x^k+t_{k}v^k,
\end{equation}
and GOTO {\sc Stop criterion}.
\end{method}
\begin{remark}
The previous method was proposed by Fliege and Svaiter \cite{Benar2000} and becomes the classical steepest descent method when $m=1$. Other variants of Method~\ref{algor1} can be found in \cite{Benar2005, Iusem2004, Fukuda2011}. 
\end{remark}
Next proposition ensures that the sequence generated by the Method~\ref{algor1} is well-defined.  
\begin{proposition} \label{p:wd}
The sequence $\{x^k\}$ generated by the steepest descent method with Armijo rule is well-defined.
\end{proposition}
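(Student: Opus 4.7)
The plan is to handle the two sources of nondeterminism separately: the existence of a $\sigma$-approximate steepest descent direction $v^k$, and the finiteness of the Armijo backtracking procedure defining $t_k$. If the stopping criterion triggers at some iterate, there is nothing to show, so throughout one assumes $x^k$ is not a critical Pareto point.

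For the existence of $v^k$, I would simply exhibit one. By Lemma~\ref{dir:md1}, the minimization problem (\ref{eq:dsdd}) admits the unique solution $v(x^k)$ with $\alpha(x^k)<0$, and this exact direction is trivially a $\sigma$-approximate direction (satisfying Definition~\ref{def:2012} with equality for $\sigma=0$ and hence, since $\alpha(x^k)<0$, with non-strict inequality for any $\sigma\in[0,1)$). So the inexact rule can always be met, and the remaining work is to show that the Armijo set in (\ref{eq:sl}) is nonempty.

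For the Armijo step, I would use that $v^k$ is a descent direction at $x^k$ (Lemma~\ref{lemma2012}(b)), so $\langle \nabla f_i(x^k), v^k\rangle < 0$ for every $i\in I$. Continuous differentiability of each $f_i$ gives, for $t\downarrow 0$,
\[
f_i(x^k+tv^k)-f_i(x^k)-\beta t\langle \nabla f_i(x^k),v^k\rangle = (1-\beta)t\langle \nabla f_i(x^k),v^k\rangle + o(t).
\]
Since $\beta\in(0,1)$ and $\langle \nabla f_i(x^k),v^k\rangle<0$, the leading term is strictly negative, so the whole expression is negative for all $t$ sufficiently small, say $t\in(0,\bar t_i]$. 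Taking $\bar t := \min_{i\in I}\bar t_i > 0$ (valid because $I$ is finite) yields the vectorial inequality $F(x^k+tv^k)\preceq F(x^k)+\beta t\,JF(x^k)v^k$ for every $t\in(0,\bar t]$. Choosing $j\in\mathbb{N}$ with $2^{-j}\leq\bar t$ shows the set in (\ref{eq:sl}) is nonempty, so its maximum element $t_k\in(0,1]$ is well-defined, and then $x^{k+1}$ is defined by (\ref{eq:xk}).

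I do not anticipate any serious obstacle: the whole argument is a coordinate-wise Taylor/Armijo check, and the only mildly delicate point is passing from each scalar inequality to the vectorial one, which is handled by finiteness of $I$ and taking the minimum over the $m$ threshold step sizes.
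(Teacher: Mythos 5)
Your proof is correct and takes essentially the same approach as the paper: the paper's own proof is a one-line appeal to item (b) of Lemma~\ref{lemma2012} together with continuous differentiability of $F$ (deferring details to Fliege and Svaiter \cite{Benar2000}), and your argument is precisely the fleshed-out version of that, namely the descent-direction property plus a coordinate-wise Taylor/Armijo check, with the exact direction $v(x^k)$ witnessing the existence of a $\sigma$-approximate direction.
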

\begin{proof}
The proof follows from the item $ii$ of Lemma~\ref{lemma2012} combined with the fact that $F$ is continuously differentiable. See \cite{Benar2000} for more details.
\end{proof}

\section{Convergence Analysis}\label{sec4}
In this section, we present a partial convergence result without any additional assumption on $F$ besides the continuous differentiability. In the sequel, assuming quasi-convexity of $F$ and following the ideas of \cite{Benar2005, Bento2012}, we extend the full convergence result presented in \cite{Benar2005} for quasi-convex multicriteria optimization. It can be immediately seen that, if Method~\ref{algor1} terminates after a finite number of iterations, then it terminates at a critical Pareto point. From now on,  we will assume that $\{x^k\}$, $\{v^k\}$ and $\{t_k\}$ are infinite sequences generated by Method~\ref{algor1}.

To simplify the notation, in what follows we will utilize the scalar function  $\varphi:\mathbb{R}^m\to\mathbb{R}$ defined as follows:
\[
\varphi(y)=\max_{i\in I}\langle y,e_i\rangle, \qquad I=\{ 1,  \ldots, m \},
\]
where $\{e_i\}\subset \mathbb{R}^m$ is the canonical base of the space $\mathbb{R}^m$. It is easy to see that the following properties of the function $\varphi$ hold:
\begin{equation}\label{eq:prop1}
\varphi(x+y)\leq\varphi(x)+\varphi(y),\qquad\varphi(tx)=t\varphi(x), \qquad x,y\in\mathbb{R}^m, \quad t\geq 0.
\end{equation}
\begin{equation}\label{eq:prop2}
x\preceq y\quad \Rightarrow \quad  \varphi(x)\leq\varphi (y),\qquad x,y\in\mathbb{R}^m.
\end{equation}

\subsection{Partial Convergence Result}

The following theorem shows that if $F$ is continuously  differentiable then the sequence of the functional values of the sequence $\{x^k\}$, $\{F(x^k)\}$,  is monotonously decreasing and the accumulation points of $\{x^k\}$ are critical Pareto points. The proof of the next theorem can be found partly in \cite{Benar2000} and \cite{Benar2005}. We chose to present a proof within this paper.

\begin{theorem}\label{resconv1} The following statements hold:
\begin{itemize}
	\item [i)] $\{F(x^k)\}$ is decreasing;
	\item [ii)] If $\{x^k\}$ has accumulation point, then $\{t_k^2\|v^k\|^2\}$ is a summable sequence and
\begin{equation}\label{eq:2012-100}
	\lim_{k\to+\infty}t_k\|v^k\|^2=0;
	\end{equation}
	\item [iii)] Each accumulation point of the sequence $\{x^k\}$, if any, is a critical Pareto point.
\end{itemize}
\end{theorem}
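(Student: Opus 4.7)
The plan is to combine the Armijo rule (\ref{eq:sl}) with the fundamental bound $\max_{i\in I}\langle\nabla f_i(x^k),v^k\rangle\le-\tfrac{1}{2}\|v^k\|^2$, which follows from Definition~\ref{def:2012} together with $\alpha(x^k)\le 0$. Item~(i) is then immediate: any iterate $x^k$ is not critical (else the method has stopped), so Lemma~\ref{lemma2012}(b) gives $JF(x^k)v^k\prec 0$, and (\ref{eq:sl}) yields $F(x^{k+1})\preceq F(x^k)+\beta t_k JF(x^k)v^k\prec F(x^k)$.

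For item~(ii) I would scalarize the Armijo inequality through $\varphi$. Using (\ref{eq:prop1})--(\ref{eq:prop2}) and the fundamental bound,
\[
\varphi\bigl(F(x^{k+1})\bigr)-\varphi\bigl(F(x^k)\bigr)\le\varphi\bigl(F(x^{k+1})-F(x^k)\bigr)\le \beta t_k\max_{i\in I}\langle\nabla f_i(x^k),v^k\rangle\le-\frac{\beta}{2}\,t_k\|v^k\|^2,
\]
so $\{\varphi(F(x^k))\}$ is monotone decreasing. If some subsequence $x^{k_j}\to\bar x$, continuity of $F$ and $\varphi$ exhibits $\varphi(F(\bar x))$ as a lower bound; telescoping the displayed inequality then produces $\sum_k t_k\|v^k\|^2<\infty$. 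Since $t_k\in(0,1]$, this upgrades to $\sum_k t_k^2\|v^k\|^2<\infty$ and forces $t_k\|v^k\|^2\to 0$.

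For item~(iii), fix an accumulation point $\bar x$ with $x^{k_j}\to\bar x$. Lemma~\ref{lemma2012-3} together with the identity $\alpha(x)=-\tfrac{1}{2}\|v(x)\|^2$ gives $\|v^{k_j}\|\le(1+\sqrt\sigma)\|v(x^{k_j})\|$, so continuity of $v(\cdot)$ (Lemma~\ref{lemma2012-1}) bounds $\{v^{k_j}\}$ and a further subsequence produces $v^{k_j}\to\bar v$. If $\liminf_j t_{k_j}>0$, the limit $t_{k_j}\|v^{k_j}\|^2\to 0$ proved above forces $\bar v=0$; combined with the reverse inequality $(1-\sqrt\sigma)\|v(x^{k_j})\|\le\|v^{k_j}\|$ (same lemma) and Lemma~\ref{lemma2012-1}, this gives $v(\bar x)=0$, so $\bar x$ is critical Pareto.

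The main obstacle is the remaining case $t_{k_j}\to 0$. Then $t_{k_j}<1$ eventually, so backtracking shows that $2t_{k_j}$ failed the Armijo test: some $i_j\in I$ satisfies $f_{i_j}(x^{k_j}+2t_{k_j}v^{k_j})>f_{i_j}(x^{k_j})+2\beta t_{k_j}\langle\nabla f_{i_j}(x^{k_j}),v^{k_j}\rangle$. Since $I$ is finite, pass to a subsequence fixing $i_j\equiv i^\star$, and use the mean value theorem to write the left side as $f_{i^\star}(x^{k_j})+2t_{k_j}\langle\nabla f_{i^\star}(\xi^{k_j}),v^{k_j}\rangle$ for some $\xi^{k_j}$ on the segment. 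Dividing by $2t_{k_j}$ and using $\xi^{k_j}\to\bar x$ (since $t_{k_j}v^{k_j}\to 0$) together with continuity of $\nabla f_{i^\star}$ yields $(1-\beta)\langle\nabla f_{i^\star}(\bar x),\bar v\rangle\ge 0$, hence $\max_{i\in I}\langle\nabla f_i(\bar x),\bar v\rangle\ge 0$. Passing to the limit in the $\sigma$-approximate inequality gives $\max_{i\in I}\langle\nabla f_i(\bar x),\bar v\rangle+\tfrac{1}{2}\|\bar v\|^2\le(1-\sigma)\alpha(\bar x)\le 0$, so $\bar v=0$ and $\alpha(\bar x)=0$, i.e.\ $v(\bar x)=0$.
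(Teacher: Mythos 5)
Your proof is correct and follows the same overall strategy as the paper's: item (i) from Lemma~\ref{lemma2012}(b) and the Armijo inequality, item (ii) by scalarizing through $\varphi$, using the bound $\max_{i\in I}\langle\nabla f_i(x^k),v^k\rangle\le-\frac{1}{2}\|v^k\|^2$ and telescoping, and item (iii) by extracting a convergent subsequence of $\{v^{k_j}\}$ (via Lemma~\ref{lemma2012-3} and the continuity in Lemma~\ref{lemma2012-1}) and splitting on whether the steplengths stay bounded away from zero. Two details differ, both to your advantage. In the degenerate case the paper fixes $r\in\mathbb{N}$, observes that the Armijo test fails at $2^{-r}$ for $s$ large, lets $s\to+\infty$ and then $r\to+\infty$; this involves an index $i_0$ that implicitly depends on both $r$ and $s$, a dependence the paper does not address. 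Your version --- testing the failure at $2t_{k_j}$, pinning the index $i^\star$ by pigeonhole over the finite set $I$, and applying the mean value theorem before passing to the limit --- handles that dependence cleanly and avoids the double limit. Likewise, in the non-degenerate case you pass from $\bar v=0$ to $v(\bar x)=0$ via the two-sided bound $(1-\sqrt\sigma)\|v(x^{k_j})\|\le\|v^{k_j}\|\le(1+\sqrt\sigma)\|v(x^{k_j})\|$, whereas the paper argues that $\bar v=0$ is a $\sigma$-approximate direction at $\bar x$ and invokes Lemma~\ref{lemma2012}(a); both work, and the subsequent limit in the $\sigma$-approximate inequality forcing $\alpha(\bar x)=0$ matches the paper. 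The only nit: your dichotomy ``$\liminf_j t_{k_j}>0$'' versus ``$t_{k_j}\to 0$'' is not exhaustive as written; in the second case you should first pass to a further subsequence along which $t_{k_j}\to 0$ (the limits $\bar x$ and $\bar v$ are preserved), which is what the paper does by assuming from the outset that $t_{k_s}\to\bar t$.
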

\begin{proof}
The iterative step in Method~\ref{algor1} implies that 
\begin{equation}\label{eq:dir:md3}
F(x^{k+1})\preceq F(x^k)+\beta t_kJ F(x^k)v^k, \qquad x^{k+1}=x^k+t_kv^k, \qquad k=0, 1, \ldots .
\end{equation}
Since $\{x^k\}$ is an infinite sequence, for all $k$, $x^k$ is not a critical Pareto point of $F$. Thus, the item $i$ follows from the item $ii$ of Lemma~\ref{lemma2012} combined with the last vector inequality.

Suppose now that $\{x^k\}$ has an accumulation point $\bar{x}\in {\mathbb R}^n$ and let $\{x^{k_s}\}$ be a subsequence of $\{x^k\}$ such that $\lim_{s \to +\infty}x^{k_s}=\bar{x}$. Since $F$ is continuous and $\lim_{s \to +\infty}x^{k_s}=\bar{x}$ we have $\lim_{s \to +\infty}F(x^{k_s})=F(\bar{x})$. So, taking into account that $\{F(x^k)\}$ is a decreasing sequence and has $F(\bar{x})$ as an accumulation point, it is easy to  conclude that the whole sequence $\{F(x^k)\}$ converges to $F(\bar{x})$. So, from the definition of the function $\varphi$, we conclude that $\{\varphi(F(x^k))\}$ converges to $\varphi(F(\bar{x}))$ and, in particular,
\begin{equation}\label{eq:2012-10}
\varphi(F(\bar{x}))\leq \varphi(F(x^k)), \qquad k=0,1,\ldots.
\end{equation}
From (\ref{eq:dir:md3}), (\ref{eq:prop1}), (\ref{eq:prop2}) and definition of $v^k$, we obtain 
\[
\varphi(F(x^{k+1}))\leq\varphi(F(x^k))+\beta t_k\left((1-\sigma)\alpha(x^k)-(1/2)\|v^k\|^2 \right),\qquad k=0,1\ldots,
\]
or, equivalently,
\begin{equation}\label{eq:prop3}
\varphi(F(x^{k+1}))-\varphi(F(x^k))\leq \beta\left((1-\sigma)t_k\alpha(x^k)-(1/2)t_k\|v^k\|^2 \right),\qquad k=0,1\ldots.
\end{equation}
Adding the last inequality from $k=0$ to $n$ and taking into account that $|\alpha(x^k)|=-\alpha(x^k)$, we have
\[
\varphi(F(p^{n+1}))-\varphi(F(p^{0}))\leq -\beta\sum_{k=0}^{n}\left[(1-\sigma)t_k|\alpha(x^k)|+(1/2)t_k\|v^k\|^2 \right].
\]
Thus, because $\beta\in(0,1)$ and $\varphi(F(\bar{x}))\leq \varphi(F(p^{n+1}))$ (see  (\ref{eq:2012-10})), from the last inequality, we get 
\[
\sum_{k=0}^{n}\left[(1-\sigma)t_k|\alpha(x^k)|+(1/2)t_k\|v^k\|^2 \right] \leq\frac{\varphi(F(p^{0}))-\varphi(F(\bar{x}))}{\beta},\qquad n\geq0.
\]
But this tell us that (recall that $\sigma\in[0,1)$)
\begin{equation}\label{eq:2012-1000}
\sum_{k=0}^{+\infty}t_k|\alpha(x^k)|<+\infty\qquad\mbox{and}\qquad  \sum_{k=0}^{+\infty}t_k\|v^k\|^2 <+\infty,
\end{equation}
from which follows the second part of the item $ii$. The first part of the item $ii$ follows from last inequality in (\ref{eq:2012-1000}) together with the fact that $t_k\in(0,1]$.

We assume initially that $\bar{x}$ is an accumulation point of the sequence $\{x^k\}$ and that $\{x^{k_s}\}$ is a subsequence of $\{x^k\}$ converging to $\bar{x}$. From Lemma~\ref{lemma2012-1}, we may conclude that $\{v(x^{k_s})\}$ and $\{\alpha_{x^{k_s}}\}$ converge, respectively, to $v(\bar{x})$ and $\alpha_{\bar{x}}$. In particular, from Lemma~\ref{lemma2012-3}, it follows that $\{v^{k_s}\}$ is bounded and, hence, has a convergent subsequence. Moreover, the sequence $\{t_k\}\subset ]0,1]$ also has an accumulation point $\bar{t}\in [0,1]$. We assume, without loss of generality, that $\{t_{k_s}\}$ converges to $\bar{t}$ and $\{v^{k_s}\}$ converges to some $\bar{v}$. From the equality (\ref{eq:2012-100}), it follows that
\begin{equation}\label{eq:2012-101}
\lim_{s\to+\infty}t_{k_s}\|v^{k_s}\|^2=0.
\end{equation}
We have two possibilities to consider:
\begin{itemize}
	\item [{\bf a)}] $\bar{t}>0$;
	\item [{\bf b)}] $\bar{t}=0$.
\end{itemize}
Assume that item ${\bf a}$ holds. Then, from (\ref{eq:2012-101}), it follows that $\bar{v}=0$.
On the other hand, from the Definition \ref{def:2012} of $v^k$,  we obtain
\[
\max_{1\leq i\leq m}\langle\nabla f_i(x^{k_s}),v^{k_s} \rangle+1/2\|v^{k_s}\|^2\leq(1-\sigma)\alpha(x^{k_s}), \qquad s=0,1\ldots. 
\]
Letting $s$ go to $+\infty$ in above inequality, it follows that $\bar{v}=0$ is a $\sigma$-approximation steepest descent method for $F$ at $\bar{x}$ and, from the item $i$ of Lemma~\ref{lemma2012}, we conclude that $\bar{x}$ is a critical Pareto point of $F$.

Now, assume that item ${\bf b}$ holds true. Since $v^{k_s}$ is a $\sigma$-approximation steepest descent method for $F$ at $x^{k_s}$ and $\{x^{k_s}\}$ is not a critical Pareto point, we have
\[
\max_{i\in I}\langle\nabla f_i(x^{k_s}),v^{k_s}\rangle\leq \max_{i\in I}\langle\nabla f_i(x^{k_s}),v^{k_s}\rangle+(1/2)\|v^{k_s}\|^2<(1-\sigma)\alpha(x^{k_s})<0,
\]
where the last inequality is a consequence of the item $iii$ of Lemma \ref{dir:md1}. Hence, letting $s$ go to $+\infty$ in the last inequalities and using that $\{v^{k_s}\}$ converges to $\bar{v}$, we obtain
\begin{equation}\label{eq:conv1}
\max_{i\in I}\langle\nabla f_i(\bar{x}),v(\bar{x})\rangle\leq (1-\sigma)\alpha(\bar{x})\leq0.
\end{equation}
Take $r\in\mathbb{N}$. Since $\{t_{k_s}\}$ converges to $\bar{t}=0$, we conclude that if $s$ is large enough,
\[
t_{k_s}<2^{-r}.
\]
From (\ref{eq:sl}) this means that the Armijo condition (\ref{eq:dir:md3}) is not satisfied for $t=2^{-r}$, i.e.,
\[
F(x^k+2^{-j}v^{k_s})\npreceq F(x^{k_s})+\beta 2^{-r}J F(x^{k_s})v^{k_s}, 
\]
which  means that there exists at least one $i_0\in I$ such that 
\[
f_{i_0}(x^{k_s}+2^{-r}v^{k_s})> f_{i_0}(x^{k_s})+\beta 2^{-r}\langle\nabla f_{i_0}(x^{k_s}),v^{k_s}\rangle.
\]
Letting $s$ go to $+\infty$ in the above inequality, taking into account that $\nabla f_{i_0}$ and $\exp$ are  continuous  and using that $\{v^{k_s}\}$ converges to $\bar{v}$, we obtain
\[
f_{i_0}(\bar{x}+2^{-r}v(\bar{x}))\geq f_{i_0}(\bar{x})+\beta 2^{-r}\langle\nabla f_{i_0}(\bar{x}),v(\bar{x})\rangle.
\]
The last inequality is equivalent to
\[
\frac{f_{i_0}(\bar{x}+2^{-r}v(\bar{x}))-f_{i_0}(\bar{x})}{2^{-r}}\geq \beta\langle\nabla f_{i_0}(\bar{x}),v(\bar{x})\rangle,
\]
which, letting $r$ go to $+\infty$ and assuming that $0<\beta<1$, yields $\langle\nabla f_{i_0}(\bar{x}),v(\bar{x})\rangle \geq 0$. Hence, 
\[
\max_{i\in I}\langle\nabla f_i(\bar{x}),v(\bar{x})\rangle \geq 0.
\]
Combining the last inequality with (\ref{eq:conv1}) and taking into account that $\sigma\in[0,1)$, we have
\[
\alpha(\bar{x})=0.
\]  
Therefore, from the item $iii$ of Lemma \ref{dir:md1} it follows that $\bar{x}$ is a critical Pareto point of $F$, and the proof is concluded.
\end{proof}
\begin{remark}
If the sequence $\{x^k\}$ begins in a bounded level set, for example, if 
\[
L_F(F(p_0)):=\{ x\in {\mathbb R}^n: F(x)\preceq F(p_0)\},
\]
is a bounded set, then, since $F$ is a continuous function, $L_F(F(p_0))$ is a compact set. So,  item $i$ of \emph{Theorem~\ref{resconv1}} implies that  $\{x^k\}\subset L_F(F(p_0))$ and consequently  $\{x^k\}$ is bounded. In particular,  $\{x^k\}$ has at least one accumulation point.
\end{remark}

\subsection{Full Convergence}\label{S:4.2}
In this section, under the quasi-convexity assumption on $F$, full convergence of the steepest descent method is obtained.

\begin{definition}
Let $H:{\mathbb R}^n\to\mathbb{R}^m$ be a vectorial function.
\begin{itemize}
\item [i)] $H$ is called convex  iff for every $x,y\in {\mathbb R}^n$, the following holds:
\[
H((1-t)x+ty)\preceq (1-t)H(x)+tH(y),\qquad t\in [0,1];
\]
\item [ii)] $H$ is called quasi-convex iff for every $x,y\in {\mathbb R}^n$, the following holds:
\[
H((1-t)x+ty)\preceq \max\{H(x),H(y)\},\qquad t\in [0,1],
\]
where the maximum is considered  coordinate by coordinate;
\item [iii)] $H$ is called pseudo-convex iff $H$ is differentiable and, for every $x,y\in {\mathbb R}^n$, the following holds:
\[
JH(x)(y-x)\not\prec0\quad \Rightarrow\quad H(y)\not\prec H(x).
\]
\end{itemize}
\end{definition}
\begin{remark}\label{re:conv1}
For the two first above definitions see \emph{Definition $6.2$} and \emph{Corollary~$6.6$} of \emph{\cite{Luc1989}}, pages $29$ and $31$, respectively. For the third definition see Definition $9.2.3$ of \cite{yang2002}, page $274$. Note that $H$ is convex (resp. quasi-convex) iff, $H$ is componentwise convex (resp. quasi-convex) . On the other hand, $H$ componentwise pseudo-convex  is a sufficient condition , but not necessary for $H$ to be pseudo-convex; see Theorem $9.2.3$ of \cite{yang2002}, page $274$ and Remark~\ref{remark:2012-377}.  It is immediate from above definitions that if $H$ is convex then it is quasi-convex (the reciprocal is clearly false). If $H$ is differentiable, convexity of $H$ implies that for every $x,y\in {\mathbb R}^n$,
\begin{equation}\label{eq:2012-360}
JH(x)(y-x)\preceq H(y)-H(x),
\end{equation}
from which we may conclude that $H$ is pseudo-convex. It is easy to obtain an example showing that the reciprocal is false.
\end{remark}

Next proposition provides a characterization for differentiable quasi-convex functions. 
\begin{proposition}\label{prop:qc10}
Let $H:{\mathbb R}^n\to\mathbb{R}^m$ be a differentiable function. Then, $H$ is a quasi-convex function if, only if, for every $x,y\in {\mathbb R}^n$, it holds
\[
H(y)\prec H(x)\quad \Rightarrow\quad JH(x)(y-x)\preceq 0.
\]
\end{proposition}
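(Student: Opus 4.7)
The plan is to exploit the fact (noted in Remark~\ref{re:conv1}) that the vectorial definition of quasi-convexity in use is equivalent to componentwise scalar quasi-convexity, so that the proposition reduces to the classical Mangasarian-type characterization of scalar quasi-convex functions applied coordinatewise.

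For the direct implication, I would fix $x,y\in {\mathbb R}^n$ with $H(y)\prec H(x)$ and any $i\in I$. Since each $h_i$ is scalar quasi-convex and $h_i(y)<h_i(x)$, for every $t\in[0,1]$ the scalar quasi-convex inequality yields
\[
h_i(x+t(y-x))\leq \max\{h_i(x),h_i(y)\}=h_i(x).
\]
Rearranging gives $(h_i(x+t(y-x))-h_i(x))/t\leq 0$ for $t\in(0,1]$, and letting $t\to 0^{+}$ produces $\langle \nabla h_i(x),y-x\rangle\leq 0$. Since $i$ was arbitrary this is exactly $JH(x)(y-x)\preceq 0$.

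For the converse I would work componentwise: by Remark~\ref{re:conv1} it suffices to show that each $h_i$ is scalar quasi-convex, and by the classical scalar Mangasarian characterization this in turn reduces to verifying $h_i(y)<h_i(x)\Rightarrow \langle\nabla h_i(x),y-x\rangle\leq 0$ for every $i$ and every $x,y$. The natural route is a segmental contradiction: if some $h_i$ fails to be quasi-convex on a segment $[x_0,y_0]$, the scalar function $t\mapsto h_i(x_0+t(y_0-x_0))$ attains an interior maximum at some $t^{*}\in(0,1)$, and applying the vectorial hypothesis at $z^{*}=x_0+t^{*}(y_0-x_0)$ along both halves of the segment is intended to produce incompatible sign conclusions on $\langle\nabla h_i(z^{*}),y_0-x_0\rangle$.

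The main obstacle is precisely this reverse direction: the vectorial premise $H(y)\prec H(x)$ is strictly stronger than the individual scalar inequality $h_i(y)<h_i(x)$, so extracting the coordinatewise scalar implication from the joint hypothesis is delicate. One has to arrange the strict joint condition, for instance by perturbing the candidate point along directions that preserve the relevant scalar inequality while driving the remaining components strictly below, and this perturbation/continuity analysis along the segment is where the bulk of the technical work is concentrated.
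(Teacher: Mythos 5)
Your proof of the ``only if'' half (quasi-convexity implies the gradient implication) is correct and coincides with the paper's: the paper dispatches this half in one line as an immediate consequence of the definition plus differentiability, and your componentwise difference-quotient computation along the segment, using $\max\{h_i(x),h_i(y)\}=h_i(x)$ and letting $t\to 0^{+}$, is exactly that argument.

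The ``if'' half is where the genuine gap sits, and you have named it precisely without closing it. The reduction to the scalar Mangasarian characterization of each $h_i$ needs the single-component implication $h_i(y)<h_i(x)\Rightarrow\langle\nabla h_i(x),y-x\rangle\leq 0$, whereas the hypothesis only speaks when \emph{every} component drops strictly at the same pair $(x,y)$; the perturbation you sketch, which is supposed to upgrade one strict scalar inequality to a strict vector inequality, cannot be carried out in general. In fact no argument can fill this hole, because the ``if'' direction is false as stated: for $H:\mathbb{R}\to\mathbb{R}^{2}$, $H(t)=(-t^{2},t^{2})$, the premise $H(y)\prec H(x)$ is never satisfied (it would force $y^{2}>x^{2}$ and $y^{2}<x^{2}$ simultaneously), so the gradient condition holds vacuously, yet $h_{1}(t)=-t^{2}$ is not quasi-convex and hence neither is $H$ in the componentwise sense of Remark~\ref{re:conv1}. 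For comparison, the paper's own proof of this direction avoids your componentwise reduction: it assumes $H(\tilde y)\prec H((1-t)\tilde x+t\tilde y)$, applies the hypothesis at the segment point to conclude $\frac{d}{dt}h_i((1-t)\tilde x+t\tilde y)\leq 0$, and integrates to get monotonicity of each $h_i$ along the segment; but it never treats the case in which that strict vector domination fails, which is exactly the case the counterexample exploits. So your instinct that all the difficulty of Proposition~\ref{prop:qc10} is concentrated in the reverse implication is right, but the difficulty is not a removable technicality. Note that the only part of the proposition used later (in Lemma~\ref{Fejconv2}) is the easy direction, and there with the non-strict premise $F(\tilde x)\preceq F(x^{k})$; your forward argument adapts verbatim to that version, which is the statement that should really be isolated and proved.
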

\begin{proof}
Let us assume that, for every pair of points $x,y\in {\mathbb R}^n$, it holds
\begin{equation}\label{eq:2012-380}
H(y)\prec H(x)\quad \Rightarrow\quad JH(x)(x-y)\preceq 0.
\end{equation}
Take $\tilde{x},\tilde{y}\in {\mathbb R}^n$ and assume that holds
\[
H(\tilde{y})\prec H((1-t)\tilde{x}+t\tilde{y}),\qquad t\in [0,1).
\]
Using (\ref{eq:2012-380}) with $y=\tilde{y}$ and $x=(1-t)\tilde{x}+t\tilde{y}$, we obtain $(1-t)JH((1-t)\tilde{x}+t\tilde{y})(\tilde{x}-\tilde{y})\preceq 0$, which implies
\[
\frac{d}{dt}h_i((1-t)\tilde{x}+t\tilde{y})=\langle \nabla h_i((1-t)\tilde{x}+t\tilde{y}),\tilde{y}-\tilde{x}\rangle\leq 0, \qquad i\in \{1,\ldots, m\},
\]
where $h_1,\ldots,h_m$ represent the coordinate functions of $H$. But this implies that 
\[
h_i((1-t)\tilde{x}+t\tilde{y})\leq h_i(\tilde{x}),\quad i\in\{1,\ldots,m\},
\]
and, hence, that 
\[
H((1-t)\tilde{x}+t\tilde{y})\preceq H(\tilde{p})=\max\{H(\tilde{x}),H(\tilde{y})\},
\]
which proves the first part of the proposition. The proof of the second part follows immediately from the definition of quasi-convexity combined with differentiability of $H$; see \cite{Bento2012} for more details. 
\end{proof}

From the previous proposition follows immediately that pseudo-convex functions are quasi-convex. The reciprocal is naturally false. Next proposition provides a sufficient condition for a differentiable quasi-convex function to be pseudo-convex. 

\begin{definition}
A point $x^*\in {\mathbb R}^n$ is a weak  optimal Pareto point of $F$ iff there is no $x\in {\mathbb R}^n$ with $F(x)\prec F(x^*)$.
\end{definition}

\begin{proposition}
Let $H:{\mathbb R}^n\to\mathbb{R}^m$ be a differentiable quasi-convex function. If each critical Pareto point of $H$ is a weak Pareto optimal point, then $H$ is a pseudo-convex function.
\end{proposition}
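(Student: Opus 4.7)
The approach is by contrapositive: assume $H$ is not pseudo-convex and produce a critical Pareto point that fails to be weakly Pareto optimal, contradicting the hypothesis. By negating pseudo-convexity, there exist $x,y\in\mathbb{R}^n$ with $JH(x)(y-x)\not\prec 0$ and $H(y)\prec H(x)$. Applying Proposition~\ref{prop:qc10} to the strict inequality $H(y)\prec H(x)$ yields $JH(x)(y-x)\preceq 0$; combining this with $JH(x)(y-x)\not\prec 0$, there is an index $i_0\in I$ with $\langle\nabla h_{i_0}(x),y-x\rangle=0$, while $\langle\nabla h_i(x),y-x\rangle\leq 0$ for every $i\in I$.

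The crucial step is to upgrade this directional information to the stronger conclusion $\nabla h_{i_0}(x)=0$. The sub-level set $C:=\{z\in\mathbb{R}^n:h_{i_0}(z)\leq h_{i_0}(x)\}$ is convex because $h_{i_0}$ is quasi-convex (being a component of the quasi-convex $H$; see Remark~\ref{re:conv1}). For any $z\in C$, convexity gives $x+t(z-x)\in C$ for all $t\in[0,1]$, so $h_{i_0}(x+t(z-x))\leq h_{i_0}(x)$; differentiating at $t=0$ produces the supporting-hyperplane inclusion $C\subset\{z:\langle\nabla h_{i_0}(x),z-x\rangle\leq 0\}$. In particular, since $y\in C$ and $\langle\nabla h_{i_0}(x),y-x\rangle=0$, $y$ lies on the boundary hyperplane. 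Now, if $\nabla h_{i_0}(x)\neq 0$, one can pick $w\in\mathbb{R}^n$ with $\langle\nabla h_{i_0}(x),w\rangle>0$, and then $\langle\nabla h_{i_0}(x),(y+\epsilon w)-x\rangle=\epsilon\langle\nabla h_{i_0}(x),w\rangle>0$, so $y+\epsilon w\notin C$, i.e.\ $h_{i_0}(y+\epsilon w)>h_{i_0}(x)$. Letting $\epsilon\downarrow 0$ and using continuity of $h_{i_0}$ gives $h_{i_0}(y)\geq h_{i_0}(x)$, contradicting $h_{i_0}(y)<h_{i_0}(x)$. Hence $\nabla h_{i_0}(x)=0$.

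From $\nabla h_{i_0}(x)=0$ it follows directly that $x$ is a critical Pareto point in the sense of~(\ref{eq:oc}): taking multipliers $\alpha_{i_0}=1$ and $\alpha_i=0$ for $i\neq i_0$ gives $\sum_i\alpha_i\nabla h_i(x)=0$ with $\alpha\succeq 0$ and $\sum_i\alpha_i=1$, which by Lemma~\ref{dir:md1} certifies the absence of any Pareto descent direction at $x$ (equivalently, $v=0$ is the unique solution of~(\ref{eq:dsdd}) at $x$). The hypothesis of the proposition then forces $x$ to be weakly Pareto optimal, but $H(y)\prec H(x)$ directly contradicts this, completing the proof. The main obstacle is the convex-geometric argument of the second paragraph: turning the scalar identity $\langle\nabla h_{i_0}(x),y-x\rangle=0$ into $\nabla h_{i_0}(x)=0$ through the supporting-hyperplane inclusion for $C$ and a first-order perturbation of $y$; the rest of the argument is bookkeeping.
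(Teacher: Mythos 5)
Your proof is correct, but it takes a genuinely different route from the paper's. The paper fixes the candidate point $y$, disposes of the case where $y$ is critical (there the hypothesis makes pseudo-convexity vacuous), and in the non-critical case exploits the existence of a strict descent direction $v$ with $JH(y)v\prec 0$: it perturbs the witness $\tilde{x}$ to $\tilde{x}-(\delta/2)(v/\|v\|)$, which still lies in the strict sub-level region by continuity, and then Proposition~\ref{prop:qc10} forces $JH(y)(\tilde{x}-y)-\beta JH(y)v\preceq 0$ for $\beta=\delta/(2\|v\|)$, contradicting the componentwise consequence of $JH(y)(\tilde{x}-y)\not\prec 0$ and $JH(y)v\prec 0$. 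You instead show that any failure of pseudo-convexity at $x$ already forces $x$ to be critical: from $JH(x)(y-x)\preceq 0$ (Proposition~\ref{prop:qc10}) and $JH(x)(y-x)\not\prec 0$ you isolate an index $i_0$ with $\langle\nabla h_{i_0}(x),y-x\rangle=0$, and your convex-geometric argument on the sub-level set of the quasi-convex component $h_{i_0}$ (valid by Remark~\ref{re:conv1}) correctly upgrades this to $\nabla h_{i_0}(x)=0$ --- this is the standard sharp first-order fact that for a differentiable quasi-convex scalar function, $h(y)<h(x)$ implies $\langle\nabla h(x),y-x\rangle<0$ unless $\nabla h(x)=0$. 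Criticality of $x$ then follows (indeed more directly than via Lemma~\ref{dir:md1}: $\langle\nabla h_{i_0}(x),v\rangle=0$ for every $v$ already gives $\operatorname{Im}(JH(x))\cap(-{\mathbb R}^m_{++})=\emptyset$), and the hypothesis together with $H(y)\prec H(x)$ yields the contradiction. What each approach buys: yours makes explicit exactly where the hypothesis on critical points is needed (at the point where pseudo-convexity fails) and reduces the core difficulty to a clean scalar lemma; the paper's avoids the scalar reduction entirely by the descent-direction perturbation, at the cost of the slightly delicate componentwise bookkeeping in its inequality (\ref{eq:2012-790}).
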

\begin{proof}
Take $y\in\mathbb{R}^n$. Since that, by hypothesis, each critical Pareto point is an optimal weak Pareto, if $y$ is critical Pareto we have nothing to do. Let us suppose that $y$ is not a critical Pareto point. Then, there exists $v\in \mathbb{R}^n$ such that
\begin{equation}\label{eq:2012-700}
JH(y)v\prec 0.
\end{equation}

Let us assume, by contradiction, that $H$ is not pseudo-convex. In this case, there exists $\tilde{x}\in\mathbb{R}^n$ such that $H(\tilde{x})\prec H(y)$, with
\begin{equation}\label{eq:2012-750}
JH(y)(\tilde{x}-y)\nprec 0.
\end{equation}
From (\ref{eq:2012-700}) and (\ref{eq:2012-750}), it follows that
\begin{equation}\label{eq:2012-790}
JH(y)(\tilde{x}-y)-\beta JH(y)v\npreceq 0, \qquad \beta>0.
\end{equation}
Now, since $H(\tilde{x})\prec H(y)$, from the continuity of $H$ there exists $\delta>0$ such that $H(z)\prec H(y)$ for all $z\in B(\tilde{x},\delta)$ (ball with center in $\tilde{x}$ and ray $\delta$). In particular,
$H(\tilde{x}-(\delta/2)(v/\|v\|))\prec H(y)$ and, because $H$ is quasi-convex, we obtain
\[
JH(y)\left(\tilde{x}-(\delta/2)(v/\|v\|)-y\right)\preceq 0.
\] 
But this tell us that with $\beta=\delta/(2\|v\|)$, we have
\[
JH(y)(\tilde{x}-y)-\beta JH(y)v\preceq 0,
\]
which is a contradiction with (\ref{eq:2012-790}), and the resulted is proved.  
\end{proof}

\begin{remark}\label{remark:2012-377}
Consider the following vectorial function $H:\mathbb{R}\to\mathbb{R}^2$ given by $H(t)=(t,-t^3/3)$. Note that $H$ is not  compenentwise pseudo-convex because $h_2(t):=-t^3/3$ is not pseudo-convex. However, since $H$ is quasi-convex and each critical Pareto point of $H$ is weak Pareto optimal point for $H$, from last proposition, it follows that $H$ is pseudo-convex.  
\end{remark}

We know that criticality is a necessary, but not sufficient,  condition for optimality. In \cite{Bento2012} the authors proved that, under convexity  of the vectorial function $F$, criticality is equivalent to the weak optimality. Next we prove that the equivalence still happens if $F$ is just pseudo-convex.

\begin{proposition}\label{prop:optm1}
Let $H:{\mathbb R}^n\to {\mathbb R}^m$  be a pseudo-convex function. Then, $x\in {\mathbb R}^n$ is a critical Pareto point of $H$, i.e., 
\[
\mbox{Im}(\nabla H(x))\cap (-{\mathbb R}^{m}_{++})=\emptyset,
\]
iff  $x$ is a weak  optimal Pareto point of $H$.
\end{proposition}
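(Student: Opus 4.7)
My plan is to prove both implications directly from the definitions, since pseudo-convexity is exactly what is needed to link the first-order condition to weak optimality.

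For the forward implication, suppose $x$ is a critical Pareto point, so that $JH(x)v \not\prec 0$ for every $v \in \mathbb{R}^n$. Pick any $y \in \mathbb{R}^n$ and specialize to $v = y - x$; then $JH(x)(y-x) \not\prec 0$, and the defining property of pseudo-convexity gives $H(y) \not\prec H(x)$. Since $y$ was arbitrary, $x$ is a weak optimal Pareto point. This direction is essentially immediate and does not even need the full pseudo-convexity — just the contrapositive of its definition.

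For the reverse implication, I would argue by contradiction. Suppose $x$ is weak Pareto optimal but not critical. Then there exists $v \in \mathbb{R}^n$ with $JH(x)v \prec 0$, i.e.\ $\langle \nabla h_i(x), v\rangle < 0$ for every $i \in \{1,\ldots,m\}$. A standard first-order Taylor expansion of each coordinate function gives
\[
h_i(x + tv) = h_i(x) + t \langle \nabla h_i(x), v \rangle + o(t), \qquad i \in \{1,\ldots,m\},
\]
so for $t > 0$ sufficiently small one has $h_i(x+tv) < h_i(x)$ for all $i$ simultaneously, that is $H(x+tv) \prec H(x)$. This contradicts the assumption that $x$ is weak Pareto optimal, so $x$ must be critical.

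The only subtle point is the standard observation that a strict descent direction in the vectorial sense produces a nearby point $x+tv$ with $H(x+tv) \prec H(x)$; since $I$ is finite, the $o(t)$ terms can be made uniformly small across the $m$ coordinates, so picking $t$ small enough works for all components at once. Neither direction really exploits pseudo-convexity beyond its defining implication, which is why the proof is short; pseudo-convexity is precisely the ``right'' hypothesis that upgrades criticality from a necessary to a sufficient condition for weak Pareto optimality.
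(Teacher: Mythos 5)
Your proof is correct and follows essentially the same route as the paper: the forward implication is exactly the paper's argument (you apply the defining implication of pseudo-convexity directly, where the paper phrases it as a contradiction via the contrapositive), and the reverse implication is the standard Taylor-expansion argument showing a descent direction $JH(x)v \prec 0$ yields a nearby point with $H(x+tv)\prec H(x)$, which the paper merely cites to \cite{Bento2012} but which you correctly spell out, including the observation that finiteness of $I$ lets one choose $t$ uniformly over the coordinates. No gaps.
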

\begin{proof}
Let us suppose that $x$ is a critical Pareto point of $H$. Assume by contradiction that $x$ be not a weak optimal Pareto  point of $H$, i.e., that there exists $\tilde{x}\in {\mathbb R}^n$ such that
\begin{equation}\label{eq:cdiff2}
H(\tilde{x})\prec H(x).
\end{equation}
As $H$ is pseudo-convex, then (\ref{eq:cdiff2}) implies that
\[
JH(x)(\tilde{x}-x)\prec 0.
\]
But this contradicts the fact of $x$ being a critical Pareto point of $H$, and the first part is concluded.  The second part is a simple consequence of the fact that $F$ is differentiable with the definitions of critical Pareto point and weak optimal Pareto point. For more details, see \cite{Bento2012}.
\end{proof}

\begin{definition}\label{Fejconv01}
A sequence $\{z^k\}\subset M$ is  quasi-Fej\'er convergent to a  nonempty set $U$ iff, for all $z\in U$, there exists a sequence $\{\epsilon_k\}\subset\mathbb{R}_+$ such that
\[
\sum_{k=0}^{+\infty}\epsilon_k<+\infty, \qquad \|z^{k+1}-z\|^2\leq \|z^k-z\|^2+\epsilon_k, \qquad k=0,1,\ldots.
\]
\end{definition}
In next lemma we recall the theorem known as quasi-Fej\'er convergence.
\begin{lemma}\label{Fejconv1}
Let $U\subset \mathbb{R}^n$ be a nonempty set and $\{z^k\}\subset \mathbb{R}^n$ a quasi-Fej\'er convergent sequence. Then, $\{z^k\}$ is bounded. Moreover, if an accumulation point $\bar{z}$ of $\{z^k\}$ belongs to $U$, then the whole sequence $\{z^k\}$ converges to $\bar{z}$ as $k$ goes to $+\infty$.
\end{lemma}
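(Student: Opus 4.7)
The plan is to prove the two conclusions in sequence, both by directly exploiting the defining inequality of quasi-Fejér convergence applied to a fixed, carefully chosen reference point in $U$.

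For boundedness, I would fix an arbitrary $z \in U$ and iterate the quasi-Fejér inequality
\[
\|z^{k+1}-z\|^2 \leq \|z^k-z\|^2 + \epsilon_k
\]
down to $k=0$, producing by a simple induction the bound
\[
\|z^{k+1}-z\|^2 \leq \|z^0-z\|^2 + \sum_{j=0}^{k}\epsilon_j \leq \|z^0-z\|^2 + \sum_{j=0}^{+\infty}\epsilon_j.
\]
Since the series $\sum_{j=0}^{+\infty}\epsilon_j$ is finite by hypothesis, the right-hand side is a constant independent of $k$, so $\{z^k\}$ is bounded. As a free by-product, this argument shows that the nonnegative real sequence $\{\|z^k-z\|^2\}$ is bounded for every $z \in U$, which I will use again in the next step.

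For the convergence statement, let $\bar{z} \in U$ be an accumulation point and let $\{z^{k_s}\}$ be a subsequence with $z^{k_s}\to\bar{z}$. Specializing the quasi-Fejér inequality to $z=\bar{z}$ (which is legitimate since $\bar{z}\in U$) and iterating from any index $k_s$ to $k \geq k_s$ gives
\[
\|z^{k+1}-\bar{z}\|^2 \leq \|z^{k_s}-\bar{z}\|^2 + \sum_{j=k_s}^{k}\epsilon_j.
\]
Given $\varepsilon>0$, I would first choose $N$ large enough so that $\sum_{j\geq N}\epsilon_j < \varepsilon/2$, using summability, and then choose $s$ large enough so that $k_s \geq N$ and $\|z^{k_s}-\bar{z}\|^2 < \varepsilon/2$, using that $\{z^{k_s}\}$ converges to $\bar{z}$. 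The displayed inequality then yields $\|z^{k+1}-\bar{z}\|^2 < \varepsilon$ for all $k \geq k_s$, which is exactly the statement $z^k\to \bar{z}$.

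I do not expect any genuine obstacle: the lemma is the classical quasi-Fejér convergence theorem, and the only technical point is to be careful that the sequence $\{\epsilon_k\}$ may depend on the reference point chosen in $U$, so the reference point must be fixed once and for all at the start of each of the two arguments. The essential mechanism is that quasi-Fejér monotonicity with a summable error makes the real sequence $k\mapsto \|z^k-\bar{z}\|^2$ eventually trapped in arbitrarily small intervals around a subsequential limit value, upgrading subsequential convergence to full convergence.
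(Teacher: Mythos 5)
Your proof is correct and complete: the iteration of the quasi-Fej\'er inequality gives boundedness, and the $\varepsilon/2$-splitting between the summable tail $\sum_{j\geq N}\epsilon_j$ and the subsequential proximity $\|z^{k_s}-\bar{z}\|^2$ upgrades subsequential convergence to full convergence, with the correct care that the error sequence $\{\epsilon_k\}$ may depend on the reference point in $U$. The paper itself gives no proof and simply cites Burachik et al.\ for this classical result; your argument is exactly the standard one found there, so there is nothing to compare beyond noting that you have supplied the details the paper delegates to the reference.
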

\begin{proof}
See Burachik et al. \cite{Bur1995}.
\end{proof}
Consider the following set 
\begin{equation}\label{eq:qc1}
U:=\{x\in {\mathbb R}^n: F(x)\preceq F(x^k),\;\; k=0,1, \ldots\}.
\end{equation}
In general, the above set may be an empty set. To guarantee that $U$ is nonempty, an additional assumption on the sequence $\{x^k\}$ is needed. In the next remark we give such a condition.
\begin{remark}\label{remark:2012-500}
If  the sequence $\{x^k\}$ has an accumulation point, then  $U$ is nonempty. Indeed, let $\bar{x}$ be an accumulation point of the sequence $\{x^k\}$. Then, there exists a subsequence $\{p^{k_j}\}$ of $\{x^k\}$ which converges to $\bar{x}$. Since $F$ is continuous $\{F(x^k)\}$ has $F(\bar{x})$ as an accumulation point. Hence, using $\{F(x^k)\}$ as a decreasing sequence \emph{(}see \emph{item i} of \emph{Theorem~\ref{resconv1}}\emph{)} the usual arguments easily show that the whole sequence  $\{F(x^k)\}$ converges to $F(\bar{x})$  and the following relation holds
\[
F(\bar{x})\preceq F(x^k),\qquad  k=0,1, \ldots,
\]
which implies that $\bar{x}\in U$, i.e., $U\neq\emptyset$.
\end{remark}

{\bf Assumption 1.} Each $v^k$ of the sequence $\{v^k\}$ is a scalarization compatible, i.e., exists a sequence $\{w^k\}\subset\mbox{conv} S$ such that
\[
v^k=-JF(x^k)^tw^k,\qquad k=0,1,\ldots.
\]
As was observed in Section~\ref{sec3}, this assumption holds if $v^k=v(x^k)$, i.e., if $v^k$ is the exact steepest descent direction at $x^k$. We observe that the Assumption 1 also was used in \cite{Benar2005} for proving the full convergence of the sequence generated for  the Method in the case that $F$ is convex. From now on, we will assume that the Assumption 1 holds true. 

In next lemma we present the main result of this section. It is fundamental to the proof of the global convergence result of the sequence $\{x^k\}$.
\begin{lemma}\label{Fejconv2}
Suppose that $F$ is quasi-convex and $U$, defined in (\ref{eq:qc1}), is nonempty. Then, for all $\tilde{x}\in U$, the following inequality is true:
\[
\|x^{k+1}-\tilde{x}\|^2\leq\|x^k-\tilde{x}\|^2+t_k^2\|v^k\|^2.
\]  
\end{lemma}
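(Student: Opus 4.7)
The plan is to expand the squared norm by writing $x^{k+1}=x^{k}+t_{k}v^{k}$:
\[
\|x^{k+1}-\tilde{x}\|^{2}=\|x^{k}-\tilde{x}\|^{2}+2t_{k}\langle v^{k},x^{k}-\tilde{x}\rangle+t_{k}^{2}\|v^{k}\|^{2},
\]
so the target inequality reduces to showing that the cross term is non-positive, i.e.\ $\langle v^{k},x^{k}-\tilde{x}\rangle\leq 0$. Here is where Assumption~1 enters: write $v^{k}=-JF(x^{k})^{t}w^{k}$ with $w^{k}\in\operatorname{conv}S$ (so the components of $w^{k}$ are non-negative and sum to $1$). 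Taking the adjoint gives
\[
\langle v^{k},x^{k}-\tilde{x}\rangle=\langle w^{k},JF(x^{k})(\tilde{x}-x^{k})\rangle,
\]
and since $w^{k}$ has non-negative coordinates, it suffices to prove the vector inequality $JF(x^{k})(\tilde{x}-x^{k})\preceq 0$.

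To establish this inequality I would invoke the quasi-convex characterization of Proposition~\ref{prop:qc10}, which requires the strict relation $F(\tilde{x})\prec F(x^{k})$. I would get this strict version by chaining two facts. First, $\tilde{x}\in U$ gives $F(\tilde{x})\preceq F(x^{k+1})$. Second, the Armijo step guarantees
\[
F(x^{k+1})\preceq F(x^{k})+\beta t_{k}\,JF(x^{k})v^{k},
\]
and since $v^{k}$ is a $\sigma$-approximate steepest descent direction at $x^{k}$ (not a critical Pareto point), Lemma~\ref{lemma2012} together with $\max_{i}\langle\nabla f_{i}(x^{k}),v^{k}\rangle\leq(1-\sigma)\alpha(x^{k})-\tfrac{1}{2}\|v^{k}\|^{2}<0$ shows $JF(x^{k})v^{k}\prec 0$, hence $F(x^{k+1})\prec F(x^{k})$. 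Combining the two componentwise estimates yields $F(\tilde{x})\prec F(x^{k})$.

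With the strict inequality in hand, Proposition~\ref{prop:qc10} applied with $x=x^{k}$, $y=\tilde{x}$ gives $JF(x^{k})(\tilde{x}-x^{k})\preceq 0$, and the argument closes as explained above.

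I expect the main subtlety—rather than any technical obstacle—to be precisely this bootstrapping from the weak relation $F(\tilde{x})\preceq F(x^{k})$ (which is all that the definition of $U$ directly provides) to the strict relation $F(\tilde{x})\prec F(x^{k})$ needed by the quasi-convex characterization. It is essential here that the Armijo rule together with the $\sigma$-approximate descent property produces strict decrease $F(x^{k+1})\prec F(x^{k})$ at every step; otherwise the componentwise chaining would fail and Proposition~\ref{prop:qc10} could not be applied.
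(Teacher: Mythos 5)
Your proof is correct and follows essentially the same route as the paper: expand $\|x^{k+1}-\tilde{x}\|^2$ (the paper phrases this via the law of cosines, which amounts to the same identity), transfer the cross term through Assumption~1 to $\langle w^k, JF(x^k)(\tilde{x}-x^k)\rangle$, and kill it using the quasi-convexity characterization of Proposition~\ref{prop:qc10} together with the non-negativity of the coordinates of $w^k\in\operatorname{conv}S$. The one place where you genuinely add something is the bootstrapping from $F(\tilde{x})\preceq F(x^k)$ to $F(\tilde{x})\prec F(x^k)$: Proposition~\ref{prop:qc10} as stated requires the strict relation, membership in $U$ only supplies the weak one, and the paper's proof invokes the proposition without addressing this mismatch. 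Your chain $F(\tilde{x})\preceq F(x^{k+1})\prec F(x^k)$ --- the strict step coming from the Armijo inequality together with $JF(x^k)v^k\prec 0$, which holds by Lemma~\ref{lemma2012} because the sequence is infinite and hence no $x^k$ is critical --- is exactly the patch needed, so your write-up is in fact slightly more complete than the published argument.
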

\begin{proof}
Consider the hinge $\left(\overline{x^k\tilde{x}},\overline{x^{k}x^{k+1}},\alpha\right)$, where $\overline{x^k\tilde{x}}$ is the segment joining $x^k$ to $\tilde{x}$; 
$\overline{x^kx^{k+1}}$ is the segment joining $x^k$ to $x^{k+1}$ and $\alpha=\angle(\tilde{x}-x^k,v^k)$. By the law of cosines, we have
\[
\|x^{k+1}-\tilde{x}\|^2= \|x^k-\tilde{x}\|^2+t_k^2\|v^k\|^2-2t_k\|x^k-\tilde{x}\|\|v^k\|\cos\alpha,\qquad k=0,1,\ldots.
\]
Thus, taking into account that $\cos(\pi-\alpha)=-\cos\alpha$ and $\langle-v^k,\tilde{x}-x^k\rangle=\|v^k\|\|x^k-\tilde{x}\|\cos(\pi-\alpha)$, the above equality becomes
\[
\|x^{k+1}-\tilde{x}\|^2= \|x^k-\tilde{x}\|^2+t_k^2\|v^k\|^2+2t_k\langle -v^k,\tilde{x}-x^k\rangle,\qquad k=0,1,\ldots.
\]
On the other hand, from Assumption 1, there exists $w^k\in\mbox{conv} S$ such that 
\[
v^{k}=-JF(x^k)^tw^k,\qquad k=0,1,\ldots.
\]
Hence, the last equality yields 
\[
\|x^{k+1}-\tilde{x}\|= \|x^k-\tilde{x}\|^2+t_k^2\|v^k\|^2+2t_k\langle JF(x^k)^tw^k,\tilde{x}-x^k\rangle,\quad k=0,1,\ldots,
\]
from which, we obtain
\begin{equation}\label{eq:contv10}
\|x^{k+1}-\tilde{x}\|^2= \|x^k-\tilde{x}\|^2+t_k^2\|v^k\|^2+2t_k\langle w^k,JF(x^k)(\tilde{x}-x^k)\rangle,\quad k=0,1,\ldots.
\end{equation}
Since $F$ is quasi-convex and $\tilde{x}\in U$, from Proposition \ref{prop:qc10} with $H=F$, $x=x^k$ and $y=\tilde{x}$, we have
\[
J F(x^k)(\tilde{x}-x^k)\preceq 0, \qquad k=0,1,\ldots.
\]
So, because $w^k\in\mbox{conv} S$, we get
\begin{equation}\label{eq:contv11}
\langle w^k,JF(x^k)(\tilde{x}-x^k)\rangle\leq 0,\quad k=0,1,\ldots.
\end{equation}
Therefore, by combining (\ref{eq:contv10}) with (\ref{eq:contv11}), the lemma proceeds. 
\end{proof}

\begin{proposition} \label{pr:cf}
If $F$ is quasi-convex, $\mathbb{R}^n$ has non-negative curvature and $U$, defined in (\ref{eq:qc1}), is a nonempty set, then the sequence $\{x^k\}$ is quasi-Fej\'er convergent to $U$.
\end{proposition}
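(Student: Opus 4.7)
The plan is to identify a single summable error sequence $\{\epsilon_k\}$ that works simultaneously for every reference point in $U$. Lemma~\ref{Fejconv2} already produces, for every $\tilde x\in U$ and every $k$, the inequality
\[
\|x^{k+1}-\tilde x\|^{2}\;\leq\;\|x^{k}-\tilde x\|^{2}+t_{k}^{2}\|v^{k}\|^{2},
\]
and crucially the right-hand error $t_k^{2}\|v^{k}\|^{2}$ does not depend on $\tilde x$. So, matching Definition~\ref{Fejconv01}, it suffices to take $\epsilon_k:=t_k^{2}\|v^{k}\|^{2}$ and to verify that $\sum_{k\geq 0}\epsilon_k<+\infty$.

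To obtain this summability I would rerun the telescoping argument already used inside the proof of Theorem~\ref{resconv1}(ii), but replace the accumulation point $\bar x$, whose existence is not assumed in the present statement, by an arbitrary fixed point $\tilde x\in U$. The key observation is that the role played by $\bar x$ in that proof is solely to produce a uniform lower bound $\varphi(F(\bar x))\leq \varphi(F(x^{k}))$; in our setting $\tilde x\in U$ gives $F(\tilde x)\preceq F(x^{k})$ for every $k$, and then property (\ref{eq:prop2}) of $\varphi$ yields $\varphi(F(\tilde x))\leq \varphi(F(x^{k}))$. Summing inequality (\ref{eq:prop3}) from $k=0$ to $n$ and using this bound on the telescoped left-hand side gives
\[
\sum_{k=0}^{n}\left[(1-\sigma)t_{k}|\alpha(x^{k})|+\tfrac{1}{2}t_{k}\|v^{k}\|^{2}\right]\;\leq\;\frac{\varphi(F(x^{0}))-\varphi(F(\tilde x))}{\beta},\qquad n\geq 0,
\]
so in particular $\sum_{k\geq 0}t_{k}\|v^{k}\|^{2}<+\infty$. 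Since $t_{k}\in(0,1]$ implies $t_{k}^{2}\leq t_{k}$, it follows that $\sum_{k\geq 0}t_{k}^{2}\|v^{k}\|^{2}<+\infty$, which is exactly the summability required.

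There is no genuine obstacle: everything reduces to the two already-established ingredients (Lemma~\ref{Fejconv2} for the quasi-Fej\'er inequality and the estimate (\ref{eq:prop3}) for the summable residuals), with the minor adjustment that the uniform lower bound on $\varphi(F(x^{k}))$ now comes from the nonemptiness of $U$ rather than from an accumulation point. The non-negative curvature hypothesis plays no extra role in the Euclidean setting, where the ambient space is flat; it is stated because the law-of-cosines computation behind Lemma~\ref{Fejconv2} is the step that would require non-negative curvature in the Riemannian extension of the argument.
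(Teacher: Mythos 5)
Your proof is correct, and it follows the same basic strategy as the paper: combine the per-iteration inequality of Lemma~\ref{Fejconv2} with summability of $\epsilon_k=t_k^{2}\|v^k\|^{2}$ and match Definition~\ref{Fejconv01}. The one substantive difference is how the summability is obtained. The paper simply cites item $ii$ of Theorem~\ref{resconv1}, whose stated hypothesis is that $\{x^k\}$ has an accumulation point --- a hypothesis that is not among the assumptions of Proposition~\ref{pr:cf} and does not obviously follow from $U\neq\emptyset$ (boundedness of $\{x^k\}$ is only available \emph{after} the quasi-Fej\'er property is established, via Lemma~\ref{Fejconv1}, so invoking it here would be circular). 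You correctly identify that the accumulation point in the proof of Theorem~\ref{resconv1}$(ii)$ serves only to furnish the uniform lower bound $\varphi(F(\bar x))\leq\varphi(F(x^k))$, and you replace it by the bound $\varphi(F(\tilde x))\leq\varphi(F(x^k))$ coming from $\tilde x\in U$ together with (\ref{eq:prop2}); the telescoping of (\ref{eq:prop3}) then goes through verbatim and yields $\sum_k t_k\|v^k\|^2<+\infty$, hence $\sum_k t_k^2\|v^k\|^2<+\infty$ since $t_k\in(0,1]$. This is a genuine (if small) improvement in rigor over the paper's one-line proof, which as written rests on a hypothesis it has not verified. Your closing observation about the non-negative curvature assumption being vacuous in $\mathbb{R}^n$ and inherited from the Riemannian version of the argument is also accurate.
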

\begin{proof}
The resulted follows from the item $ii$ of Theorem~\ref{resconv1} and Lemma \ref{Fejconv2} combined with Definition \ref{Fejconv01}.
\end{proof}
\begin{theorem}
Suppose that $F$ is quasi-convex, and $U$, as defined in (\ref{eq:qc1}), is a nonempty set. Then, the sequence $\{x^k\}$ converges to a critical Pareto point of $F$.
\end{theorem}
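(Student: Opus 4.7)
The proof plan is to chain together the three main tools developed in this subsection: quasi-Fejér convergence (Proposition \ref{pr:cf}), the partial convergence result identifying accumulation points as critical (Theorem \ref{resconv1}, item iii), and the fact (Remark \ref{remark:2012-500}) that any accumulation point automatically lies in $U$. The only non-trivial issue is to verify that all hypotheses of Lemma \ref{Fejconv1} really are in place, in particular that an accumulation point exists and belongs to $U$.

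First, I would invoke Proposition \ref{pr:cf}: under the standing hypotheses (quasi-convexity of $F$ and nonemptiness of $U$), the sequence $\{x^k\}$ is quasi-Fejér convergent to $U$. By Lemma \ref{Fejconv1} this immediately implies that $\{x^k\}$ is bounded, hence admits at least one accumulation point $\bar{x}\in\mathbb{R}^n$.

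Next, I would combine two facts about $\bar{x}$. On the one hand, item iii of Theorem \ref{resconv1} guarantees that $\bar{x}$ is a critical Pareto point of $F$. On the other hand, by the argument recorded in Remark \ref{remark:2012-500}, the existence of an accumulation point together with item i of Theorem \ref{resconv1} (monotonicity of $\{F(x^k)\}$) forces the whole sequence $\{F(x^k)\}$ to converge to $F(\bar{x})$ and yields
\[
F(\bar{x})\preceq F(x^k),\qquad k=0,1,\ldots,
\]
so that $\bar{x}\in U$.

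Finally, since $\bar{x}\in U$ is an accumulation point of the quasi-Fejér convergent sequence $\{x^k\}$, the second part of Lemma \ref{Fejconv1} applies and yields $\lim_{k\to+\infty}x^k=\bar{x}$, a critical Pareto point of $F$. The main obstacle, if any, is purely bookkeeping: one has to be careful that the accumulation point produced by quasi-Fejér boundedness is the same object whose criticality and membership in $U$ one is asserting, so that Lemma \ref{Fejconv1} can be applied exactly once to close the argument; no new estimates are required beyond those already established.
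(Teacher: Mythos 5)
Your proposal is correct and follows essentially the same route as the paper's own proof: quasi-Fej\'er convergence via Proposition \ref{pr:cf}, boundedness and an accumulation point via Lemma \ref{Fejconv1}, membership of that point in $U$ via Remark \ref{remark:2012-500}, full convergence via the second part of Lemma \ref{Fejconv1}, and criticality via item $iii$ of Theorem \ref{resconv1}. The only difference is the (immaterial) order in which criticality and membership in $U$ are established.
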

\begin{proof}
From  Proposition~\ref{pr:cf}, $\{x^k\}$ is Fej\'er convergent to $U$. Thus, Lemma \ref{Fejconv1} guarantees that $\{x^k\}$ is bounded and, hence, has an accumulation point $\bar{x}\in {\mathbb R}^n$. Thus, from Remark~\ref{remark:2012-500}, we conclude that $\bar{x}\in U$ and, hence, that the whole sequence $\{x^k\}$ converges to $\bar{x}$ as $k$ goes to $+\infty$ (see Lemma \ref{Fejconv1}). The conclusion of the proof is a consequence of item $iii$ of Theorem~ \ref{resconv1}.
\end{proof}
\begin{corollary}\label{res:fconv100}
If $F$ is pseudo-convex, $\mathbb{R}^n$ has non-negative curvature and $U$, as defined in (\ref{eq:qc1}), is a nonempty set, then the sequence $\{x^k\}$ converges to a weak optimal Pareto point of $F$.
\end{corollary}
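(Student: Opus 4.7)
The plan is to reduce the corollary to the preceding theorem by observing that pseudo-convexity is stronger than quasi-convexity, and then upgrade the conclusion from ``critical Pareto point'' to ``weak optimal Pareto point'' by appealing to Proposition \ref{prop:optm1}.

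More concretely, I would first note that, as remarked immediately after Proposition \ref{prop:qc10}, every differentiable pseudo-convex function $H:\mathbb{R}^n\to\mathbb{R}^m$ is quasi-convex: if $H(y)\prec H(x)$ then pseudo-convexity forces $JH(x)(y-x)\prec 0$, and in particular $JH(x)(y-x)\preceq 0$, which is exactly the characterization of quasi-convexity given by Proposition \ref{prop:qc10}. Hence, under the hypotheses of the corollary, $F$ is quasi-convex, and $U$ is nonempty by assumption.

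Next, I would apply the preceding theorem directly. Since $F$ is quasi-convex and $U\ne\emptyset$, that theorem guarantees that the sequence $\{x^k\}$ generated by Method \ref{algor1} converges to some $\bar{x}\in\mathbb{R}^n$ which is a critical Pareto point of $F$, i.e., $\mathrm{Im}(JF(\bar{x}))\cap(-\mathbb{R}^m_{++})=\emptyset$.

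Finally, I would invoke Proposition \ref{prop:optm1}: for a pseudo-convex function $F$, being a critical Pareto point is equivalent to being a weak optimal Pareto point. Applied to $\bar{x}$, this yields the desired conclusion. There is no real obstacle here; the corollary is a clean composition of the quasi-convex convergence theorem with the pseudo-convex criticality/weak-optimality equivalence, and the only thing worth double-checking is that ``pseudo-convex $\Rightarrow$ quasi-convex'' is indeed available in the paper (it is, via Proposition \ref{prop:qc10}) so that the hypotheses of the preceding theorem are met.
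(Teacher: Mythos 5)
Your proposal is correct and follows exactly the paper's own argument: pseudo-convexity implies quasi-convexity (via Proposition \ref{prop:qc10}), the preceding theorem gives convergence to a critical Pareto point, and Proposition \ref{prop:optm1} upgrades criticality to weak Pareto optimality. You merely spell out the implication ``pseudo-convex $\Rightarrow$ quasi-convex'' in more detail than the paper does, which is a welcome clarification but not a different route.
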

\begin{proof}
Since $F$ is pseudo-convex, and in particular quasi-convex, the corollary is a consequence of the previous theorem and Proposition \ref{prop:optm1}.
\end{proof}

\section{Variational Rationality: Inexact Proximal Algorithms as Self Regulation Problems}\label{sec5}

In this section, we consider an endless unsatisfied man, who, instead to renounce, aspires, and partially
satisfice, using worthwhile changes.

\subsection{Variational rationality}

\noindent{\bf  1) The course between unsatisfied needs, aspirations, and
satisfaction levels}

Variational rationality (Soubeyran \cite{Soubeyran2009, Soubeyran2010, Soubeyran2012a, Soubeyran2012b}) is a
purposive and dynamic approach of behaviors. It modelizes the course pursuit
between desired ends and feasible means. It is a theory of the endless
unsatisfied man, who, given a lot of unsatisfied needs, both renounces to
satisfy some of them and aspires to satisfice some others. Let us summarize
some of the main points of this conative approach of behaviors, based on
cognition (knowledges), motivation (desires) and affect (feelings and
emotions),
\begin{itemize}
\item [i)] the agent, focusing his attention on the unsatisfied needs he has chosen to satisfice, considers desired ends. He forms aspirations (distal goals).
Setting aspirations is a way to know what he really wants among all his
wishes, without considering if they are realistic or not.

\item [ii)] then, the agent starts to consider feasible means (defined as the means
he must find, build, gather and learn how to use);

\item [iii)] given the difficulty to gather such feasible means, the agent chooses
to partially satisfice his aspirations;
\end{itemize}
Then, the agent self regulates all his goal oriented activities:
\begin{itemize}
\item [iv)] goal setting, setting proximal goals is a way for him to divide the
difficulty, to better know what he can really do. This allows him to balance
between ``desired enough" ends and ``feasible enough" means;

\item [v)] goal striving represents the path (way of doing, strategy) the agent
chooses to follow and the obstacles he must overcome to attain his
successive proximal goals and partially satisfice;

\item [vi)] goal pursuit is the revision of his goals, using feedbacks coming from
successes and failures.
\end{itemize}

This variational approach is progressive (adaptive). The step by step joint
formation of distal (global) and proximal (local) goals and related actions
is a process including a lot of interactions, tatonnements,
adjustments,\ldots, driven by inexact perceptions, evaluations and judgments.

Among several variational principles, three of them are worth mentioning in
the present paper,

- the ``satisficing with not too much sacrificing" principe and the
``worthwhile to change principle" (Soubeyran \cite{Soubeyran2009, Soubeyran2010}) \ 

- the ``tension reduction-tension production" principle (Soubeyran \cite{Soubeyran2012a, Soubeyran2012b}).

\noindent{\bf 2) Unsatisfied needs, aspirations, satisfaction and satisfying
levels, and aspiration gaps}

In the specific case of this paper let us modelize the main motivational
concepts of the variational approach of Soubeyran \cite{Soubeyran2009, Soubeyran2010}. They include,

a) \textbf{The map of unsatisfied needs (the needs system).}   An agent has two ways to perceive, judge and estimate a situation, either
in term of unsatisfaction or in term of satisfaction. Usually an agent deals with a lot of unsatisfied needs which depend of his present
situation $x\in X$. Let $I=\left\{ 1,2,\ldots,m\right\} $ be the list of
different potential needs. The perceived unsatisfied  needs functions are $0\leq n_{i}(x)<+\infty$ be the strength
of each perceived need $i\in I$ for each situation $x\in X.$ Let $N:X\to\mathbb{R}^m$, given by 
\[
N(x)=(n_{1}(x),n_{2}(x),\ldots, n_{i}(x),\ldots,n_{m}(x)),
\]
be the map of unsatisfied needs in this situation. These needs can be rather vague and
abstract. Hull \cite{Hull1935} and Murray \cite{Murray1938} give an extensive list of different
needs.

b) \textbf{The map of aspiration gaps.} As soon as the agent chooses to do not renounce to satisfy,
at least partially, all these unsatisfied needs, they become, in this
present situation, aspiration gaps $a_{i}(x), i\in I$, although in general, the perceived aspiration gaps $a_{i}(x)\geq 0,i\in I$ are lower than perceived unsatisfied needs: $0\leq a_{i}(x)\leq $ $n_{i}(x),i\in I$ because agents usually aspire to fill no more than their unsatisfied needs. 

c) \textbf{The map of aspiration levels (desirable ends, or the distal goal system).} Let us denote by $\overline{g}_{i}(x)$, $i\in I$, the aspiration levels for $x\in X$. They represent still vague, abstract, and non committed higher order
goals (visions, ideals, aspirations, fantasies, dreams, wishes, hopes,
wants, and desires). These aspirations levels represent
desirable (but perhaps irrealistic) ends. Lewin \cite{Lewin1951} defines
aspiration levels as desirable ends, some being irrealistic
in a near future, and others not.

d) \textbf{The map of satisfaction levels (the experienced utility system).}
\textbf{\ \ }Most of the time unsatisfied needs are partially satisfied.
Let $G:X\to\mathbb{R}^m$, $G(x)=(g_{1}(x),g_{2}(x),..,g_{i}(x),..,g_{m}(x))$, be the map
of present satisfaction levels (or outcomes) in the present situation $x$,
where $g_{i}(x)\leq \overline{g}_{i}(x)$, $i\in I$, i.e, the levels at which all needs
are partially satisfied. 

e) \textbf{The map of discrepancies (the drive system).} The
differences between aspiration levels and satisfaction levels define more
precisely aspiration gaps $a_{i}(x)=\overline{g}_{i}(x)-g_{i}(x)\geq 0,i\in I$ which are non negative. We will assume in this paper that aspirations gaps are
equals to unsatisfied needs, i.e, they represent the discrepancies
\[
f_{i}(x)=n_{i}(x)=a_{i}(x)=\overline{g}_{i}(x)-g_{i}(x)\geq 0,i\in I,\qquad
x\in X,
\]
because, usually, agents aspire to satisfy their perceived unsatisfied
needs, even if, in a second stage, they have not the intention to satisfy
all of them. The crude perception of these gaps generates feelings and
emotions, the so called drives, Hull \cite{Hull1935}.

We consider here satisfaction levels $g_{i}(x),\,i\in I$, instead of
discrepancies. Moreover, for simplification, we consider all aspiration
levels as constant, i.e, $\overline{g}_{i}(x)=\overline{g}_{i}<+\infty ,$ $%
i\in I$, $x\in X.$ Then, 
\[
f_{i}(x)=\overline{g}_{i}-g_{i}(x)\geq 0,\quad \mbox{and}\quad
g_{x}(v)=-f_{x}(v),\qquad i\in I,\;x\in X. 
\]

%e) \textbf{The map of satisficing levels (the proximal goals system).} Because aspirations levels $\overline{g}_{i}>g_{i}(x)$, $i\in I$ can be very desirable but irrealistic ends, the agent considers satisficing levels $%
%\widetilde{g}_{i}(x),i\in I$ which are more realistic, but less desirable
%ends, higher than the present satisfaction levels $g_{i}(x),i\in I$, but
%lower than the aspiration levels $\overline{g}_{i},i\in I,$ i.e $g_{i}(x)<$
%$\widetilde{g}_{i}(x)\leq \overline{g}_{i},i\in I.$ Simon \cite{Simon1955} defines
%such ``satisficing levels" and names them ``aspiration levels" (which later
%creates a confusion in terminology among his followers). Bandura \cite{Bandura1977a, Bandura1977b} defines such proximal goals.

The main problem is to know how the agent sets all these levels, step by
step (progressively).

\noindent{\bf 3) Feasible means and the ``goal system"}

A ``goal system" (Kruglanski et al.\cite{Kruglanski2002}) comprises i) a cognitive
network of mental representations of goals and means which are structurally
interconnected by several more or less strong cognitive links, ii) in the
short run a subset of limited available resources (physiological, material,
mental, social means) because means are scarce and difficult to obtain, iii)
an allocation process where goals compete for the use of these limited
available resources, iv) a motivational process of goal setting, goal
commitment, goal striving and goal pursuit (using affective feedback
engendered in response to success and failure outcomes, goal revision
including persistence of pursuit, means substitution and the management of
goal-conflict). In our specific case the goal system is the satisfaction map 
\[
X\ni x\longmapsto G(x)\in R^{m}.
\]
Available means are identied to the situation $x\in X$. These means can represent actions, resources and
capabilities ``to be able to do" them (see Soubeyran \cite{Soubeyran2009, Soubeyran2010}). The fact
that outcomes compete for restricted means can be modelized as follows: we
decompose the given $x$ into the sum $\ x=x_{1}+x_{2}+...+x_{i}+.....+x_{k}$
where $x_{i}\in X$ is the bundle of means allocated to goal $i$ and $%
g_{i}=g_{i}(x_{i})$ is the level of satisfaction of this objective.

\subsection{The proximal ``satisficing-but not too much sacrificing" principle%
}

\noindent\textbf{The local evaluation of marginal satisfaction levels of change}.
Starting from the situation $x\in X$, let $v\in X$ be a direction of change$%
, $ $t>0$ be the intensity of change, $u=y-x=$ $tv$ be the change and $%
G_{x}(v)=JG(x)v$ be the vector of marginal satisfaction levels of change.
The related differents needs may have different degree of importance and
urgency and, each step, the agent must weight each of them to define
priorities. This task (solving trade off) is not easy and must be done
progressively. Define $g_{x}:\mathbb{R}^n\to\mathbb{R}$, given by
\[
g_x(v):=\min_{i\in I} \langle\nabla g_i (x),v\rangle=\min_{i\in I} (JG(x)v)_{i},i\in I,\qquad i\in I,
\]
the marginal satisfaction function. It represents the minimum of the
different marginal satisfaction levels $(JG(x)v)_{i},i\in I$. The
consideration of this marginal satisfaction function avoids to choose
weights for each marginal satisfaction level and to have to adapt them each
step.

\noindent\textbf{Taking care of exploration costs. }Let situations like $x\in X$
represents means which generate the vector of satisfaction levels $G(x)$. \
The agent, in situation $x\in X$, considers (explores) new situations $y=x+tv,t>0,v\in X.$ This global exploration process is costly. Let us define
the local consideration costs (search costs, exploration costs) $%
c_{x}(v)=(1/2)\left\Vert v\right\Vert ^{2}\geq 0.$ The choice of a quadratic
function modelizes the case where local exploration is not too coslty:
consideration costs $c_{x}(v)\geq 0$ are large ``in the large" and small ``in
the small". Notice that, while the agent considers feasible directions of
change over the whole state space, he takes care of consideration costs (a
local aspect).

\noindent\textbf{The local search of directions of aspirations. }In general\textbf{\ }%
(Soubeyran \cite{Soubeyran2009, Soubeyran2010}) \textbf{\ }the\textbf{\ }proximal payoff balances
desired ends and feasible means. In this paper the proximal payoff $%
l_{x}(v)=g_{x}(v)-c_{x}(v)$ balances the marginal satisfaction levels
and the costs to consider them (local exploration costs). Since 
\[
l_{x}(v)\geq
0\Longleftrightarrow g_{x}(v)\geq c_{x}(v),
\]
we will say that it is ``worthwhile to explore in direction $v,$ starting from $x$", because the
marginal satisfaction level $g_{x}(v)$ in this direction is higher than the
costs $c_{x}(v)$ to be able to consider them. For each $x\in X$ consider the
local search proximal problem: find a direction of change $v(x)\in X$ such
that $l_{x}(v(x))=$ $\sup \left\{ l_{x}(v),v\in X\right\} .$ Let $\overline{l}%
_{x}=\sup \left\{ l_{x}(v),v\in X\right\} $ be the optimal proximal payoff
function at $x$ and $v(x)=\arg \max \left\{ l_{x}(v),v\in X\right\} \in X$
be the unique optimal direction of change, starting from $x.$ Then, $%
\overline{l}_{x}=l_{x}(v(x)).$ From Lemma \ref{dir:md1} and Lemma \ref{lemma2012-1}, it follows that

1) If \ $x\in X$ is Pareto critical, then $v(x)=0\in X$ and $\overline{l}%
_{x}=0.$

2) If \ $x\in X$ is not Pareto critical, then $\overline{l}_{x}>0$ and $%
g_{x}(v)\geq $ $g_{x}(v(x))>c_{x}(v(x)),i\in I.$

3) The mappings  $X\ni x\mapsto v(x) \in X$ and $X\ni x\mapsto 
\overline{l}_{x}\in R$ are continuous.

Starting from $x$ and using variational rationality concepts, the optimal
direction of change $v(x)$ defines the unique direction of aspiration and
the optimal proximal payoff $\overline{l}_{x}=l_{x}(v(x))$ defines the
proximal aspiration level (net of consideration costs). From Lemma \ref{dir:md1} and Lemma \ref{lemma2012-1}, it follows that

1) If \ $x\in X$ is Pareto critical, then the direction of aspiration and
the proximal aspiration level are zero.

2) If \ $x\in X$ is not Pareto critical, then, their is a strictly positive
direction of aspiration and the proximal aspiration level is strictly
positive.

3) direction of aspirations and proximal aspiration levels are continuous

\noindent\textbf{The local determination of a local satisficing direction of change:}  If  $x\in X$ is not Pareto critical,

i) set a local (net) satisficing level of change $\widetilde{l}_{x}=\sigma 
\overline{l}_{x},0<\sigma \leq 1$ which is positive and strictly lower than
the local aspiration level of change $\overline{l}_{x}>0.$ As a ``variational
rationality" concept (Soubeyran \cite{Soubeyran2009, Soubeyran2010}), this local satisficing level
of change $\widetilde{l}_{x}$ is situational dependent (it changes with $x\in X$). Simon \cite{Simon1955}, the father of the satisficing concept, defines an
invariant satisficing level without any reference to an aspiration level of
change $\overline{l}_{x}$.

ii) Then, the agent will try to find a direction $v^{\S }\in X$ such that $%
l_{x}(v^{\S })\geq \widetilde{l}_{x}.$ This means that the satisficing
direction of change $v^{\S }$ ``improves enough" with respect to the
aspiration direction of change $v(x)$, including \ exploration costs. In
variational term such a direction not only satisfices (Simon \cite{Simon1955}) but even
more, it balances satisficing (``improving enough") marginal satisfactions
to change $n_{x}(v)$ with some sacrifices to change $c_{x}(v)$, because the
net satisfaction level $l_{x}(v^{\S })$ is higher than the (net) satisficing
level. This is a local version of the variational ``sacrificing with not too
much sacrificing" principle (Soubeyran \cite{Soubeyran2009, Soubeyran2010}). This is equivalent to
say that the satisficing direction of change $v^{\S }\in X$ is an inexact
solution of the local search proximal problem. In this context proximal
goals are local aspiration levels of change and satisficing levels of change.

\begin{remark} 
In term of variational rationality, the Fliege-Swaiter \cite{Benar2000}
steepest descent method appears to be an ``aspiration driven local search
proximal algorithm". In situation $x\in X$, the distal goal is the
aspiration level of change $\overline{l}_{x}$ and the proximal goal is the
satisficing level $\widetilde{l}_{x}.$
\end{remark}
\subsection{The proximal ``worthwhile to change" principle and goal difficulty%
}

Inertia matters because to be able to change from some situation $x\in X$ to
a new improving situation $y\in X$ is costly. As variational concepts, there
are two kinds of ``costs to be able to change" (Soubeyran \cite{Soubeyran2009, Soubeyran2010}): i)
consideration costs (perception, exploration, search and evaluation
costs \ldots), ii) capability costs to change $C(x,y)$, i.e, the costs to be
able to change (to be able to stop to use old means, to be able to use
again old means, and to be able to imagine, find, build, gather, and learn
how to use new means). Means can be capabilities ( competences, skills),
ingredients and resources. In the present paper consideration costs are $%
c_{x}(v)=(1/2)\left\Vert v\right\Vert ^{2}$ and capability costs to change
are $C(x,y)=K\left[ tJG(x)(y-x)\right] =tJG(x)(y-x),$ with $t>0.$ This
formulation, specific to the present paper, means that costs to change from $%
x$ to $y$ increase with the difficulty to change, modelized here as the
vector of gradients $\Lambda (x,y)=$ $tJG(x)(y-x)$, including the step
length of change $t>0.$ Then, the second variational principle tells us that
it is ``worthwhile to change" from $x$ to $y$ if advantages to change $%
A(x,y)=G(y)-G(x)$ are higher than some proportion, $\beta >0,$ of costs to
change, i.e $G(y)-G(x)\geq \beta C(x,y)$ where $\beta >0$ is a rate of
tolerance which calibrates how the change (transition) $x\curvearrowright y$
is acceptable$.$ More generally (Soubeyran \cite{Soubeyran2009, Soubeyran2010}) it is ``worthwhile
to change" from $x$ to $y$ if motivations to change $M(x,y)=U\left[ A(x,y)%
\right] $ are higher than some proportion, $\beta >0,$ of resistances to
change $R(x,y)=\Lambda \left[ C(x,y)\right] $ where $U(.)$ and $\Lambda (.)$
are the experienced utility and desutility of advantages and costs to change.

The variationel concept of ``worthwhile changes" is related to the famous
Lindblom \cite{Lindblom1959} ``muddling through" economizing principle where agents make
small steps (incremental changes, choosing the step size in our context)
and successive limited comparisons (balancing pro and cons).

\begin{remark} 
Our paper considers quasi-convex (or quasi-concave) payoffs. In
term of variational rationality, this case is very interesting, because it
allows large flat portions which can be very costly to explore (quadratic
exploration costs are ``large in the large"). Hence, in this case,
convergence is a very nice result.
\end{remark}
\subsection{Local exploration traps}

The goal of this paper has been to give conditions of convergence of a path
of change towards a Pareto optimum in a multicriteria optimization setting.
The variational concept of a behavioral trap (Soubeyran \cite{Soubeyran2009, Soubeyran2010})
appears in this context at the local level of the consideration (say
exploration) process. More precisely, we will say that $x^{\ast }\in X$ is a
local exploration trap if 
\[
\overline{l}_{x^{\ast }}=0\Longleftrightarrow
l_{x^{\ast }}(v)=g_{x^{\ast }}(v)-c_{x^{\ast }}(v)\leq 0,\qquad v\in X.
\]
This means that, locally, it is not worthwhile to explore, because, whatever
the direction of change $v\in X,$ marginal advantages to change $n_{x^{\ast
}}(v)$ are lower than local exploration costs to change $c_{x^{\ast }}(v).$
Lemma \ref{dir:md1} shows that if $x^{\ast }\in X$ is Pareto critical, then $x^{\ast
}\in X$ is a local exploration trap.

\section{Final Remarks}
We proved full  convergence of the sequence generated by this inexact method to a critical Pareto point associated to quasi-convex multicriteria optimization problems.  We also show a striking result, i.e, the strong connexion of such an inexact proximal algorithm with the self regulation problem in Psychology. Further researches can be made in this direction.

%% The Appendices part is started with the command \appendix;
%% appendix sections are then done as normal sections
%% \appendix

%% \section{}
%% \label{}

%% References
%%
%% Following citation commands can be used in the body text:
%% Usage of \cite is as follows:
%%   \cite{key}          ==>>  [#]
%%   \cite[chap. 2]{key} ==>>  [#, chap. 2]
%%   \citet{key}         ==>>  Author [#]

%% References with bibTeX database:

\bibliographystyle{model1-num-names}
\bibliography{<your-bib-database>}

%% Authors are advised to submit their bibtex database files. They are
%% requested to list a bibtex style file in the manuscript if they do
%% not want to use model1-num-names.bst.

%% References without bibTeX database:

% \begin{thebibliography}{00}

%% \bibitem must have the following form:
%%   \bibitem{key}...
%%

% \bibitem{}

% \end{thebibliography}

\end{document}